\newtheorem{thm}{Theorem}
\newtheorem{lem}[thm]{Lemma}
\newtheorem{cor}[thm]{Corollary}
\newtheorem{rem}[thm]{Remark}
\newtheorem{exa}[thm]{Example}
\numberwithin{equation}{section}
\def\Ind{\operatorname{Ind}}
\newcommand{\C}{\mathbb C}
\newcommand{\N}{\mathbb{N}}
\newcommand{\Z}{\mathbb{Z}}
\def\fa{\mathfrak{a}}
\renewcommand{\a}{\alpha}
\renewcommand{\phi}{\varphi}
\def\Aut{{\mathrm {Aut}}}
\def\supp{\mathrm{wt}}
\def\H{\mathcal{H}}
\def\fa{\mathfrak{age}(1)}
\def\S{\mathcal{S}}
\def\sl{\mathfrak{sl}}
\title[Simple weight modules over ageing algebra]{\bf Classification of simple weight modules over the
$1$-spatial ageing algebra}
\author{Rencai L\"u, Volodymyr Mazorchuk and Kaiming Zhao }
\date{}
\begin{document}

\begin{abstract}
In this paper we use Block's classification of simple modules over the first Weyl algebra to obtain a  complete
classification of simple weight modules, in particular, of Harish-Chandra modules, over the 1-spatial ageing algebra
$\fa$. Most of these modules have infinite dimensional weight spaces and so far the algebra $\fa$ is the only
Lie algebra having simple  weight modules with infinite dimensional weight spaces for which such a classification
exists. As an application we classify all simple weight modules over the $(1+1)$-dimensional
space-time Schr\"odinger algebra $\S$ that have a simple $\fa$-submodule thus constructing many new simple
weight $\S$-modules.
\end{abstract}
\vspace{5mm}
\maketitle

\begin{center}
\parbox{101mm}{{\bf Keywords:}  weight module, Schr\"odinger algebra,
ageing algebra, highest weight module}
\vspace{2mm}

\parbox{101mm}{\noindent{\bf Mathematics Subject Classification} 2010: 17B10, 17B30, 17B80}
\end{center}

\section{Introduction and description of the results}\label{s1}

Non-semisimple Lie algebras play an important role in physics where they are frequently used to study various
physical systems and explain diverse physical phenomena. For example one could mention Schr\"odinger algebras
and groups, see \cite{DDM}, conformal Galilei algebras and groups, see \cite{AI, AIK, CZ, GI1, GI2, HP, NOR, LMZ},
and ageing algebras, see \cite{HP, H1, H2, H3, H4, HEP, HU, HS1, HS2, PH, HSSU}. The representation theory of
finite dimensional semisimple Lie algebras is fairly well developed, see for example \cite{Hu,Ja,M} and the
references therein. In contrast to this, many important methods of the representation theory of semisimple algebra
are either not available or not yet developed or, at best, become much more complicated in the non-semisimple case,
confer e.g. \cite{DLMZ}. As a consequence, much less in known for non-semisimple case even for Lie algebras of
rather small dimension. Some recent results, see e.g. \cite{WZ,D,DLMZ,LMZ}, make some progress in studying modules
over certain non-semisimple extensions of the Lie algebra $\mathfrak{sl}_2$ motivated by their applications in physics.

The Schr\"odinger Lie group is the group of symmetries of the free particle Schr\"odinger equation.  The classical
{\em Schr\"odinger algebra} is the Lie algebra of this group. The main objects of study in this paper is the
extended Schr\"odinger algebra $\mathcal{S}$ in $(1+1)$-dimensional space-time and its ageing subalgebra $\fa$,
see precise definitions in Section~\ref{s2}. The name of the latter algebra comes from its use as dynamical
symmetry in physical ageing, which can be observed in strongly interacting many-body systems quenched from a
disordered initial state to the co-existence regime below the critical temperature $T_c > 0$ where several
equivalent equilibrium states exist, see \cite{BR} for details. Various representations of $\fa$ were constructed
in the literature, see for example \cite{HS1, HS2, H1, H2, H3, H4, HU, PH, HSSU}.

Both $\mathcal{S}$ and $\fa$ have natural Cartan subalgebras which allow to define the notion of weight modules,
that is modules on which elements of the  Cartan subalgebra act diagonalizably. The main objective of the present
paper is to classify all simple weight modules over $\fa$. It turns out that these modules have either all their weight spaces  one-dimensional, or  all their weight spaces infinite dimensional.
It seems that the algebra $\fa$ is the first Lie algebra having simple weight modules
with infinite dimensional weight spaces for which classification of simple weight modules is completed.
As an application we classify all simple weight $\mathcal{S}$-modules that have a simple $\fa$-submodule.
This provides many new examples of simple weight $\S$-modules (for other examples, see \cite{D,LMZ}).

The paper is organized as follows. We start with some preliminaries in Section~\ref{s2}. In Section~\ref{s3},
by embedding our algebras into the first Weyl algebra we obtain a very good presentation for the centralizer
$U_0$ of the Cartan subalgebra in the universal enveloping algebra $U(\fa)$. This is the key observation needed
to classify all simple $U_0$-modules. As a consequence, we obtain our main result, Theorem~\ref{thmmain},
which provides a classification of all simple weight modules over $\fa$. It turns out that there are four
classes of such simple modules, the first three classes consist of certain weight modules with
finite dimensional (in fact one-dimensional) weight spaces, while the last class consists of modules
for which all non-zero weight spaces are infinite-dimensional. As a bonus, in Section~\ref{s4} we
classify all simple weight $\S$-modules that have a simple $\fa$-submodule, see Theorem~\ref{thmtwo}.

\section{Preliminaries}\label{s2}

In this paper, we denote by $\Z$, $\N$, $\Z_+$ and $\C$ the sets of integers, positive integers, nonnegative
integers and complex numbers, respectively. All vector spaces and Lie algebras are over $\C$. For a Lie algebra
$\mathfrak{g}$ we denote by $U(\mathfrak{g})$ its universal enveloping algebra. We write $\otimes$ for
$\otimes_{\mathbb{C}}$.

The {\em extended Schr\"odinger algebra} $\mathcal{S}$ in $(1+1)$-dimensional space-time
is a complex Lie algebra with  a basis $\{ f,q,h,e,p,z\}$ and the Lie bracket given as follows:
\begin{equation}
\label{commrelations}
\begin{array}{lll}
\left[h,e\right]=2e, & \left[e,f\right]=h,  & \left[h,f\right]=-2f,\\
\left[e,q\right]=p, &  \left[e,p\right]=0, & \left[h,p\right]=p,\\
\left[f, p\right]=q, &\left[f,q\right]=0, &\left[h,q\right]=-q,\\
\left[p,q\right]=z. & [z,\mathcal{S}]=0.
\end{array}
\end{equation}
It is easy to see that the subspace $\fa$ of $\mathcal{S}$ spanned by $\{e,h,p,q,z\}$ is a Lie subalgebra
of $\mathcal{S}$. This subalgebra is called  the {\em ageing algebra}.
The elements $h$ and $z$ span a Cartan subalgebra $\mathfrak{h}$ while the elements $p$, $q$ and $z$ span a
copy of the Heisenberg algebra $\H$. We also denote by $\mathfrak{n}$ the subalgebra spanned by
$h$, $z$, $e$ and $p$. The subalgebra of $\mathcal{S}$ spanned by $e$, $f$ and $h$ is isomorphic to the classical
algebra $\sl_2$ and will be identified with the latter.

By Schur's lemma, the element $z$ acts as a scalar on any simple module over $\fa$ or $\mathcal{S}$. Since we are concerned with simple modules only, we will assume that  $z$ acts as a scalar $\dot{z}$ on any module in this paper. For a module
$V$   we denote by $\supp(V)$ the set of $h$-eigenvalues on $V$ and will
call these eigenvalues \emph{weights}. All $h$-eigenvectors will be called \emph{weight vectors}. For a weight
$\dot{h}$ we denote by $V_{\dot{h}}$ the corresponding {\em weight space}, that is the space of all $h$-eigenvectors
with eigenvalue $\dot{h}$. If $V$ is a simple weight $\fa$- or $\mathcal{S}$-module, then, as usual, we have
$\supp(V)\subset \dot{h}+\mathbb{Z}$ for any $\dot{h}\in \supp(V)$.

Let $U_0=\{x\in U(\fa) | [h,x]=0\}$ be the centralizer of $\mathfrak{h}$ in $U(\fa)$. Then, as usual, for every
simple weight $\fa$-module $V$ and any $\lambda\in\supp(V)$ the $U_0$-module $V_{\lambda}$ is simple.

Each $(\dot{z}, \dot{h})\in \C^2$ defines the one-dimensional $\mathfrak{n}$-module $\C w$ with the action
$h w=\dot{h} w$, $z w=\dot{z} w$ and $pw=ew=0$. Using it we define, as usual, the {\em Verma} $\fa$-module
$$M_{\fa}(\dot{z},\dot{h})=\Ind_{\mathfrak{n}}^{\fa} \C w.$$
Denote by $\bar{M}_{\fa}(\dot{z},\dot{h})$ the unique simple quotient of $M_{\fa}(\dot{z},\dot{h})$
(which exists by standard arguments, see e.g. \cite[Chapter~7]{Di}). Similarly we may
define the Verma  modules $M_{\mathcal{S}}(\dot{z}, \dot{h})$, $M_{\H}(\dot{z})$, $M_{\sl_2}(\dot{h})$
and the corresponding simple quotients $\bar{M}_{\mathcal{S}}(\dot{z}, \dot{h})$, $\bar{M}_{\H}(\dot{z})$, $\bar{M}_{\sl_2}(\dot{h})$ over $\mathcal{S}$, $\H$, and $\sl_2$, respectively. Analogously one defines the
{\em lowest weight Verma modules} $M^-_{\mathcal{S}}(\dot{z}, \dot{h})$, $M^-_{\fa}(\dot{z},\dot{h})$, $M^-_{\H}(\dot{z})$
and $M^-_{\sl_2}(\dot{h})$ and their corresponding simple quotients $\bar{M}^-_{\mathcal{S}}(\dot{z}, \dot{h})$, $\bar{M}^-_{\fa}(\dot{z},\dot{h})$, $\bar{M}^-_{\H}(\dot{z})$ and $\bar{M}^-_{\sl_2}(\dot{h})$.

\section{Simple weight modules over $\fa$}\label{s3}

We start by recalling some results from \cite{B} adjusted to our setup. Let $\mathcal{K}$ be the associative algebra
$\C(t)[s]$ where $st-ts=1$. The algebra $\mathcal{K}$ is a non-commutative  principal left and right
ideal domain, in fact, it is an  Euclidean domain (both left and right). The space $\C(t)$ becomes a
faithful $\C(t)[s]$-module by defining the action of $\C(t)$ via multiplication and the action of
$s$ via $\frac{d}{d t}$. Consider the subalgebra $R_0$
of $\mathcal{K}$ generated by $t$ and $t^2s$, and  the
subalgebra $R_{1}$ of $\mathcal{K}$ generated by $t$ and $ts$.
For convenience of later use we set $R_{\dot z}:=R_1$ for  $\dot{z}\in \C^*$.

\begin{lem}\label{lem1} Let $\dot z\in\C$.
{\hspace{2mm}}

\begin{enumerate}[$($a$)$]
\item\label{lem1.1} Let $\a=\sum_{j=0}^n \a_j s^j$, where $ \a_j\in \C(t)$ with $\a_0=1$,
be an irreducible element in $\mathcal{K}$. Then we have the following:
\begin{enumerate}[$($i$)$]
\item\label{lem1.1.1}
If the rational function $\a_j$ has a zero at $0$ of order at least $j+1$, then
$R_1/(R_1\cap \mathcal{K}\a)$ is a simple $R_1$-module. Up to isomorphism, every
$\C[t]$-torsion-free simple $R_1$-module arises in this way.
\item\label{lem1.1.2}
If the rational function $\a_j$ has a zero at $0$ of order at least $2j+1$, then
$R_0/(R_0\cap \mathcal{K}\a)$ is a simple $R_0$-module. Up to isomorphism, every
$\C[t]$-torsion-free simple $R_0$-module arises in this way.
\end{enumerate}
\item\label{lem1.2}
Any simple quotient of the $R_{\dot z}$-module $R_{\dot z}/R_{\dot z} t$ is 1-dimensional.
\item\label{lem1.3}
For any $\lambda\in \C^*$, the $R_{dot z}$-module $R_{\dot z}/R_{\dot z} (t-\lambda)$ is simple.
\item\label{lem1.4}
Let $V$ be a simple $\C[t]$-torsion-free $R_{\dot z}$-module. Then $t$ acts bijectively on $V$.
\end{enumerate}
\end{lem}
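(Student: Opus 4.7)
Since Lemma~\ref{lem1} comprises four parts, the plan is to invoke Block's classification \cite{B} for (a) and to prove (b)--(d) by direct calculation using the explicit $\C$-monomial bases of $R_{\dot z}$ inside $\mathcal{K}$. A preliminary verification gives $R_1 = \bigoplus_{c \geq d \geq 0} \C\, t^c s^d$ and $R_0 = \bigoplus_{c \geq 2d \geq 0} \C\, t^c s^d$; the crucial commutation is $[Y, t] = \varepsilon$, where $(Y, \varepsilon) = (ts, t)$ for $R_1$ and $(Y, \varepsilon) = (t^2 s, t^2)$ for $R_0$, and by induction this yields the identity $Y^b t = t(Y + \delta)^b$ with $\delta \in \{1, t\}$.

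I would handle (d) first. Torsion-freeness gives injectivity of $t$ on $V$; for surjectivity, I observe that $tV$ is an $R_{\dot z}$-submodule, which reduces via $[Y, t] = \varepsilon$ to the identity $Y(tv) = t(Yv) + \varepsilon v$, with $\varepsilon v \in tV$ because $\varepsilon \in tR_{\dot z}$. Simplicity and $tV \neq 0$ then force $tV = V$. With $t$ bijective, (a) reduces to Block: one extends $V$ to an $\mathcal{K}$-module via $s := t^{-1}Y$ (resp.\ $t^{-2}Y$); simplicity over $R_{\dot z}$ forces simplicity over $\mathcal{K}$, since any $\mathcal{K}$-submodule is automatically $R_{\dot z}$-stable, and Block's theorem then presents $V$ as $\mathcal{K}/\mathcal{K}\a$ for irreducible $\a = \sum_j \a_j s^j$ with $\a_0 = 1$. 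The order-of-zero condition on the $\a_j$ at $t = 0$ is equivalent to the algebraic equality $R_{\dot z} + \mathcal{K}\a = \mathcal{K}$, i.e., to the cyclic vector $\overline{1}$ generating $V$ as an $R_{\dot z}$-module.

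For (b), the relation $t^c s^d \cdot t = t^{c+1}s^d + d\, t^c s^{d-1}$ (from $s^d t = ts^d + d s^{d-1}$) describes $R_{\dot z}t$ as a spanning set. A telescoping reduction modulo $R_{\dot z}t$, together with $\overline{t^n} = 0$ for $n \geq 1$, collapses every strict-interior basis class $\overline{t^c s^d}$ to zero, leaving only the boundary classes ($c = d$ for $R_1$, $c = 2d$ for $R_0$); one checks that these are linearly independent by comparing $s$-degrees across the generating elements of $R_{\dot z}t$. The upshot is $R_{\dot z}/R_{\dot z}t \cong \C[Y]$ as $R_{\dot z}$-modules with $t$ acting as zero and $Y$ by multiplication, so every simple quotient is one-dimensional. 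For (c), take $\lambda \in \C^*$, let $V = R_{\dot z}/R_{\dot z}(t - \lambda)$ with basis $v_b = Y^b \bar 1$, and apply $Y^b t = t(Y + \delta)^b$ to $\bar 1$ to obtain a recursion yielding $t \cdot v_b = \lambda v_b + (\text{strictly lower-degree terms})$, with the $v_{b-1}$-coefficient equal to $-b\lambda$ for $R_1$ or $-b\lambda^2$ for $R_0$ --- nonzero because $\lambda \neq 0$. For any nonzero submodule $W \subseteq V$ and $w \in W$ of minimal $v$-degree $N$, the element $(t - \lambda)w \in W$ has strictly smaller degree and hence must vanish, forcing $N = 0$ and $w \in \C v_0$; the $Y$-action then produces $v_b \in W$ for every $b$, so $W = V$.

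The hardest step is (a): bridging the precise order-of-zero condition ($j+1$ for $R_1$ versus $2j+1$ for $R_0$) with the algebraic equality $R_{\dot z} + \mathcal{K}\a = \mathcal{K}$ demands careful reduction bookkeeping, and the quantitative asymmetry between the two cases precisely mirrors the difference between the constraints $c \geq d$ and $c \geq 2d$ defining $R_1$ and $R_0$ inside $\mathcal{K}$.
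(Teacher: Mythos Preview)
Your approach is correct and aligns closely with the paper's. Both invoke Block directly for (a), both prove (d) by noting $tV$ is a submodule, and both prove (c) by exhibiting $\{Y^i\bar 1\}$ as a basis and arguing simplicity from the action of $t$. Your degree-reduction argument in (c) makes explicit the ``easy to verify'' that the paper leaves to the reader.

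The one place worth contrasting is (b). The paper observes at once that $R_{\dot z}t$ is a \emph{two-sided} ideal (since $Yt \in tR_{\dot z}$), so the quotient $R_{\dot z}/R_{\dot z}t$ is an associative algebra, easily identified with the commutative polynomial ring $\C[Y]$; one-dimensionality of simple quotients is then immediate. Your direct basis reduction reaches the same conclusion but with more bookkeeping. The two-sided observation is worth noting as a shortcut.
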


\begin{proof}
Claim~\eqref{lem1.1.1} is \cite[Theorem~2]{B}. Claim~\eqref{lem1.1.2} follows from \cite[Theorem~4.3]{B}.

To prove claim~\eqref{lem1.2}, note that  $R_{\dot z} t$ is a two-sided ideal of $R_{\dot z}$ for any value
of $\dot z$.  The algebra $R_{\dot z}/R_{\dot z} t$ is easily checked to be the commutative algebra
$\C[ts]$ if $\dot z\ne0$. Similarly, if $\dot z=0$, we have that $R_{\dot z}/R_{\dot z} t\cong \C[t^2s]$.
All simple modules  over the latter commutative algebras  have dimension one.

To prove claim~\eqref{lem1.3}, observe that $\{(ts)^i|i\in\Z_+\}$ is a basis in
$R_{\dot z}/R_{\dot z} (t-\lambda)$ if $\dot z\ne0$. Moreover, if $\dot z=0$, then
$\{(t^2s)^i|i\in\Z_+\}$ is a basis in the quotient $R_{\dot z}/R_{\dot z} (t-\lambda)$. Using this
it is easy to verify that the $R_{\dot z}$-module $R_{\dot z}/R_{\dot z} (t-\lambda)$ is simple.

Claim~\eqref{lem1.4} follows from the fact that $tV$ is an $R_{\dot z}$-submodule of $V$ and hence is equal to $V$.
\end{proof}

Next we characterize the associative algebra $U_0$. Denote by $\mathbf{A}$ the first Weyl algebra, which we
realize as the unital subalgebra of the algebra of all linear operators on $\mathbb{C}[t]$ generated by the
linear operator $\frac{d} {d t}$ and the linear operator of multiplication by $t$ (which we will denote simply by $t$).
Alternatively, one can also think
of the algebra $\mathbf{A}$ as the unital subalgebra of the algebra $\mathcal{K}$ generated by $t$ and $s$.
In particular, we can view $R_0$ as a subalgebra of $\mathbf{A}$ generated by $t$ and $t^2\frac{d} {d t}$.
Similarly, we can view $R_{1}$ as a subalgebra of $\mathbf{A}$ generated by $t$ and $t\frac{d} {d t}$.

\begin{lem}\label{lem2}
{\hspace{2mm}}

\begin{enumerate}[$($a$)$]
\item\label{lem2.1} The associative algebra  $U_0$ is generated by $eq^2$, $pq$, $h$, and $z$ and
$$\{(eq^2)^{i_1}(pq)^{i_2}h^{i_3}z^{i_4}|i_1,i_2,i_3,i_4\in \Z_+\}$$
is a basis of $U_0$. In particular, for any $\dot{z},\dot{h}\in \C$, the image of
$\{(eq^2)^{i_1}(pq)^{i_2}|i_1,i_2\in \Z_+\}$ in $U_0/\langle h-\dot{h}, z-\dot{z}\rangle$ forms a basis there.
\item\label{lem2.2}  For any $\dot{z},\dot{h}\in \C$, there is a unique homomorphism of associative algebras
$\phi_{\dot{z},\dot{h}}: U_0\rightarrow \mathbf{A}$ such that $\phi_{\dot{z},
\dot{h}}(z)=\dot{z}$, $\phi_{\dot{z},\dot{h}}(h)=\dot{h}$ and
\begin{enumerate}[$($i$)$]
\item\label{lem2.2.1}
 $\phi_{0, \dot{h}}(pq)=t,\,\, \phi_{0, \dot{h}}(eq^2)=t^2\frac{d}{d
t}$ if $\dot{z}=0$;
\item\label{lem2.2.2} $\phi_{\dot{z}, \dot{h}}(pq)=2\dot{z} t\frac{d}
{d t},\,\, \phi_{\dot{z}, \dot{h}}(eq^2)=t+\dot{z}t\frac{d}{d
t}+2\dot{z}(t\frac{d} {d t})^2$  if $\dot{z}\neq 0$.
\end{enumerate}
\item\label{lem2.3}
We have $\phi_{\dot{z},\dot{h}}(U_0)=R_{\dot z}$ and
$U_0/\langle h-\dot{h}, z-\dot{z} \rangle\cong R_{\dot z}$.
\item\label{lem2.4}
The isomorphism in \eqref{lem2.3} induces a natural bijection between isomorphism classes of simple
$U_0 $-modules on which $h$ acts as $\dot{h}$ and $z$ acts as $\dot{z}$ and isomorphism classes of
simple $R_{\dot z}$-modules.
\end{enumerate}
\end{lem}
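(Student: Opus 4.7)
For part (a), I would apply the PBW theorem to $U(\fa)$ with ordering $e<p<q<h<z$, obtaining the basis $\{e^a p^b q^c h^d z^{e'}\}_{a,b,c,d,e'\in\Z_+}$. Since $h$ has weights $2, 1, -1, 0, 0$ on $e, p, q, h, z$ respectively, the $h$-weight of such a monomial is $2a+b-c$, and thus $U_0$ has PBW basis $\{e^a p^b q^{2a+b} h^d z^{e'}\}$. Replacing this by the stated family amounts to checking
\begin{equation*}
(eq^2)^{i_1}(pq)^{i_2} = e^{i_1} p^{i_2} q^{2i_1+i_2} + (\text{terms of strictly smaller $\{e,p,q\}$-length}),
\end{equation*}
which follows by iteratively commuting $q$'s past $e$'s and $p$'s; each bracket shortens the total length by two. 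This is a unipotent upper-triangular change of basis on each graded piece, so the stated family is a basis of $U_0$, and the specialization to $U_0/\langle h-\dot h, z-\dot z\rangle$ is immediate.

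For part (b), by (a) the algebra $U_0$ is generated by $eq^2, pq, h, z$ subject to $h$ and $z$ being central together with a single additional commutation relation. A direct computation in $U(\fa)$ using $[e,q]=p$, $[p,q]=z$, $[e,p]=0$ and the centrality of $z$ yields
\begin{equation*}
[pq,\,eq^2] \;=\; 2z\,(eq^2) - (pq)^2 - z\,(pq).
\end{equation*}
Existence of $\phi_{\dot z,\dot h}$ therefore reduces to checking that the same identity holds in $\mathbf{A}$ after applying $\phi$. Setting $D = t\,\tfrac{d}{dt}$ and using $[D,t]=t$, a brief calculation shows both sides equal $2\dot z\,t$ when $\dot z\neq 0$, and $-t^2$ when $\dot z = 0$; hence $\phi_{\dot z,\dot h}$ is well-defined.

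For part (c), the inclusion $\phi(U_0)\subseteq R_{\dot z}$ is clear from the images of the generators. For the reverse, when $\dot z\neq 0$ I solve $D = \tfrac{1}{2\dot z}\phi(pq)$ and $t = \phi(eq^2) - \tfrac{1}{2}\phi(pq) - \tfrac{1}{2\dot z}\phi(pq)^2$, exhibiting the generators $t, D$ of $R_1$ inside $\phi(U_0)$; when $\dot z = 0$, $\phi(pq)=t$ and $\phi(eq^2)=t^2\tfrac{d}{dt}$ are already the chosen generators of $R_0$. Thus $\phi$ descends to a surjection $\bar\phi: U_0/\langle h-\dot h, z-\dot z\rangle \twoheadrightarrow R_{\dot z}$. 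For injectivity I would filter $R_{\dot z}$ by $t$-degree when $\dot z\neq 0$ (with $t$ of weight one and $D$ of weight zero), or by $s$-degree when $\dot z = 0$. By (a) the source has basis $\{(eq^2)^{i_1}(pq)^{i_2}\}$, and one verifies that the leading symbol of $\bar\phi((eq^2)^{i_1}(pq)^{i_2})$ in the associated graded algebra is a nonzero scalar multiple of $t^{i_1}D^{i_2}$ (respectively $t^{2i_1+i_2}s^{i_1}$), and these are linearly independent.

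Part (d) is then formal: a simple $U_0$-module on which $h$ and $z$ act by $\dot h$ and $\dot z$ is annihilated by $\langle h-\dot h, z-\dot z\rangle$, so it factors through $U_0/\langle h-\dot h, z-\dot z\rangle\cong R_{\dot z}$ and becomes a simple $R_{\dot z}$-module, and this process is reversible on isomorphism classes. The main obstacle in the plan is the injectivity step in part (c): one must identify a filtration on $R_{\dot z}$ under which the leading symbols of the stated basis elements from (a) remain linearly independent in the associated graded, since injectivity does not follow from surjectivity alone.
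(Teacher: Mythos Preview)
Your proposal is correct and follows essentially the same outline as the paper: part~(a) via PBW, part~(b) by verifying the single commutator identity $[eq^2,pq]=-2z\,eq^2+z\,pq+(pq)^2$ on the proposed images, and parts~(c)--(d) as formal consequences. The only stylistic differences are that for (a) the paper writes down explicit inductive formulae expressing $q^ip^i$ and $e^iq^{2i}$ in the claimed generators, whereas you argue by a leading-term triangularity, and for (c) the paper simply asserts the isomorphism ``follows from (a) and (b)'' while you spell out the filtration argument. Your identification of injectivity in (c) as the nontrivial point is apt, and your proposed filtrations (by $t$-degree for $\dot z\neq 0$, noting that $[D,t]=t$ makes this an honest grading of $R_1$ with $R_1^{(i)}=t^i\C[D]$, and by $s$-degree for $\dot z=0$) do yield the stated leading symbols $t^{i_1}D^{i_2}$ and $t^{2i_1+i_2}s^{i_1}$.
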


\begin{proof}
To prove claim~\eqref{lem2.1} we argue that, by the PBW Theorem, the algebra $U_0$ has a basis
\begin{gather*}
\{e^{i_1}q^{2i_1+i_2}p^{i_2}z^{i_3}h^{i_4}|i_1,i_2,i_3,i_4\in \Z_+\}=\\
\{(e^{i_1}q^{2i_1})(q^{i_2}p^{i_2})z^{i_3}h^{i_4}|i_1,i_2,i_3,i_4\in \Z_+\}.
\end{gather*}

{\bf Step 1}: The element $q^{i}p^{i}$ can be written as a linear
combination of elements of the form $(pq)^jz^{i-j}$ for $j\in\Z_+$.

This follows by induction on $i$ from the following computation:
$$q^{i+1}p^{i+1}=q^{i+1}pp^{i}=pqq^{i}p^{i}-(i+1)q^{i}p^{i}z.$$

{\bf Step 2}: The element $e^iq^{2i}$ can be written as a linear
combination of elements of the form  $(eq^2)^k(pq)^jz^{i-k-j}$ for $k,j\in\Z_+$ with $i\ge k+j$.

This follows by induction on $i$ from the following computation:
\begin{multline*}
e^{i+1}q^{2(i+1)}=ee^{i}qq^{2i+1}=eqe^{i}q^{2i+1}+ie^{i}pq^{2i+1}=\\
eq^2e^{i}q^{2i}+2ie^{i}pq^{2i+1}=(eq^2)(e^{i}q^{2i})+2i((e^{i}q^{2i})(pq)+2ie^{i}q^{2i}z).
\end{multline*}

Claim~\eqref{lem2.1} follows easily from Steps~1 and 2.

To prove claim~\eqref{lem2.2}, we only need to check
$$\phi_{\dot{z},\dot{h}}([eq^2,pq])=[\phi_{\dot{z},\dot{h}}(eq^2),
\phi_{\dot{z},\dot{h}}(pq)]$$ for any $\dot{z}$ and $\dot{h}$. Note that
\begin{displaymath}
\begin{array}{rcl}
[eq^2,pq]&=&[eq^2,p]q+p[eq^2,q]\\&=&-2zeq^2+p(pq)q\\ &=&-2zeq^2+zpq +(pq)^2.
\end{array}
\end{displaymath}
Therefore we have $[\phi_{0,\dot{h}}(eq^2),\phi_{0,\dot{h}}
(pq)]=[t^2\frac{d}{dt},t]=t^2$ while
$$\phi_{0,\dot{h}}([eq^2,pq])=\phi_{0,\dot{h}}(-2zeq^2+zpq+(pq)^2)=t^2,$$
which implies \eqref{lem2.2.1}.

Similarly, for $\dot{z}\ne 0$ we have
\begin{displaymath}
\begin{array}{rcl}
\phi_{\dot{z},\dot{h}}([eq^2,pq])&=&\phi_{\dot{z},\dot{h}}
((-2zeq^2+zpq+(pq)^2))\\&=&-2\dot{z}(t+\dot{z}t\frac{d}{d t}+2\dot{z}(t\frac{d} {d t})^2)+\dot{z}2\dot{z}
 t\frac{d} {d t}+(2\dot{z} t\frac{d} {d t})^2\\&=&-2\dot{z}t,
\end{array}
\end{displaymath}
while
\begin{displaymath}
[\phi_{\dot{z},\dot{h}}(eq^2),\phi_{\dot{z},\dot{h}}(pq)]=
[t+\dot{z}t\frac{d}{d t}+2\dot{z}(t\frac{d} {d t})^2,2\dot{z} t\frac{d}{d t}]=-2\dot{z}t,
\end{displaymath}
which implies \eqref{lem2.2.2}.

Claim~\eqref{lem2.3} follows from claims~\eqref{lem2.1} and \eqref{lem2.2}.
Claim~\eqref{lem2.4} follows from claim~\eqref{lem2.3}.
\end{proof}

Now we address the structure of Verma modules over $\fa$.

\begin{lem}\label{lem3}
{\hspace{2mm}}

\begin{enumerate}[$($a$)$]
\item\label{lem3.1} Let $\dot{z},\dot{h}\in\C$. Then the $\fa$-module
$M_{\fa}(\dot{z},\dot{h})$ is simple if and only if $\dot{z}\ne 0$.
\item\label{lem3.2} Let $\dot{h}\in\C$.  Then we have $\dim \bar{M}_{\fa}(0,\dot{h})=1$.
\end{enumerate}
\end{lem}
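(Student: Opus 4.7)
The plan is to analyze $M_{\fa}(\dot z,\dot h)$ very concretely via PBW. Since $\fa=\n\oplus\C q$ as vector spaces and $[h,q]=-q$, the PBW theorem gives that $\{q^i w\,|\,i\in\Z_+\}$ is a basis of $M_{\fa}(\dot z,\dot h)$ and that each weight space is one-dimensional, with $q^i w$ of weight $\dot h-i$. So any nonzero submodule is graded and contains some $q^i w$.

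The key formula is
\[
p\,q^i w \;=\; i\,\dot z\,q^{i-1}w \qquad (i\ge 1),
\]
which follows by induction from $[p,q]=z$ and $pw=0$: one writes $p q^i w=qp\,q^{i-1}w+\dot z\,q^{i-1}w$ and applies the inductive hypothesis. Iterating this,
\[
p^{i}q^{i}w\;=\;i!\,\dot z^{\,i}\,w.
\]
For part (a), if $\dot z\ne 0$ and $N\subseteq M_{\fa}(\dot z,\dot h)$ is a nonzero submodule, pick $q^i w\in N$ of minimal degree; applying $p^i$ produces a nonzero multiple of $w$, so $N=M_{\fa}(\dot z,\dot h)$, proving simplicity. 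Conversely, if $\dot z=0$, I will show that $qw$ generates a proper submodule: on the Verma module $p$ acts trivially (by the displayed formula), and then by the relation $eq=qe+p$ one shows inductively that $e\,q^i w=q^i\,e w=0$, so the action of $e$ is also identically zero. Consequently, the subspace $\bigoplus_{i\ge 1}\C q^i w$ is stable under $e,h,p,q,z$ and is a proper nonzero submodule, so $M_{\fa}(0,\dot h)$ is not simple.

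For part (b), the quotient of $M_{\fa}(0,\dot h)$ by the proper submodule $\bigoplus_{i\ge 1}\C q^i w$ just constructed is one-dimensional, spanned by the image of $w$, with $e$, $p$, $q$, $z$ all acting as zero and $h$ as $\dot h$. This one-dimensional module is obviously simple, and since $\bar M_{\fa}(0,\dot h)$ is the unique simple quotient of $M_{\fa}(0,\dot h)$, it must coincide with this module; hence $\dim\bar M_{\fa}(0,\dot h)=1$.

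The only calculations with any subtlety are the two inductive formulas for $p\,q^iw$ and $e\,q^iw$ in the case $\dot z=0$; the main conceptual point is really the observation that when $\dot z=0$ the Heisenberg relation $[p,q]=z$ degenerates, which decouples $p$ (and then $e$) from the $q$-tower and forces all of $e$, $p$, $q$ to annihilate the head $w$ of the simple quotient. No obstacle is expected beyond bookkeeping.
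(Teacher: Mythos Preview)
Your proof is correct and follows essentially the same idea as the paper's: simplicity for $\dot z\neq 0$ comes from the Heisenberg action (the paper just says ``obviously simple already as an $\mathcal{H}$-module,'' which is exactly your $p^i q^i w=i!\,\dot z^i w$ computation), and the one-dimensional quotient for $\dot z=0$ is the same module the paper writes down. The only presentational difference is that for part~(b) the paper argues top-down---it exhibits the one-dimensional simple module and surjects onto it via the universal property of the Verma module---whereas you argue bottom-up by explicitly identifying the maximal submodule $\bigoplus_{i\ge 1}\C q^i w$; both routes are equally short here.
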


\begin{proof}
If $\dot{z}\ne 0$, then the module $M_{\fa}(\dot{z},\dot{h})$ is obviously simple already as an
$\mathcal{H}$-module, which implies \eqref{lem3.1}. If $\dot{z}= 0$, we have a simple $\fa$-module
$\C v$ with action $ev=pv=qv=zv=0$ and $h v=\dot{h} v$. By the universal property of Verma modules,
${M}_{\fa}(0,\dot{h})$ surjects onto $\C v$ and hence  $\bar{M}_{\fa}(0,\dot{h})=\C v$,
which implies \eqref{lem3.2}.
\end{proof}

\begin{rem}\label{remn2}
{\rm
Lemma~\ref{lem3} describes all simple highest weight $\fa$-mo\-du\-les.  Let $V$ be a simple highest weight $\fa$-module.
Consider the decomposition $V=\oplus_{\dot{h}\in\mathbb{C}}V_{\dot{h}}$ and note that all $V_{\dot{h}}$
are finite dimensional. As usual, the space $V^{\star}=\oplus_{\dot{h}\in\mathbb{C}}\mathrm{Hom}_{\C}(V_{\dot{h}},\C)$
has the natural structure of an $\fa$-module defied using the canonical involution $x\mapsto -x$, $x\in \fa$.
The module $V^{\star}$ is  a simple lowest weight $\fa$-module and the correspondence
$V\mapsto V^{\star}$ is a bijection between the sets of isomorphism classes of simple highest weight and
simple lowest weight modules.
}
\end{rem}

Before constructing some other weight $\fa$-modules, we have to define some automorphisms of the associative
algebras $U_0$ and $R_{\dot z}$.

\begin{lem}\label{lem4}
For any $i\in \Z$ there is a unique $\tau_{i}\in \Aut(U_0)$ such that
\begin{gather*}
\tau_i(pq)=pq+iz, \,\,\,\tau_i(h)=h-i, \,\,\,\tau_i(z)=z,\\
\tau_i(eq^2)=eq^2+ipq+\frac{i(i+1)}{2}z.
\end{gather*}
\end{lem}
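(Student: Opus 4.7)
The plan is to define $\tau_i$ by specifying its effect on the four generators $eq^2$, $pq$, $h$, $z$ of $U_0$ supplied by Lemma~\ref{lem2}(a), then check consistency with the defining relations and construct a two-sided inverse. The conceptual origin of the formulas is that $\tau_i$ ought to be the restriction to $U_0$ of conjugation by $q^i$ in an Ore localization of $U(\fa)$ in which $q$ becomes invertible (the formulas are easy to guess this way: $[h,q^i]=-iq^i$ gives $h\mapsto h-i$, and $q^{-1}(pq)q = pq+z$ by a short calculation). However, building such a localization is more machinery than needed; I would instead verify the lemma directly from a finite presentation of $U_0$.

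The first step is to extract such a presentation from the material already in the paper. Lemma~\ref{lem2}(a) shows that $U_0$ has a PBW-type basis in $eq^2,pq,h,z$. Since $z$ is central in all of $U(\fa)$, and $h$ is central in $U_0$ by construction, the only further relation needed is the commutator
\[
[eq^2,\,pq] \;=\; -2z(eq^2)+z(pq)+(pq)^2
\]
already computed in the proof of Lemma~\ref{lem2}(b). These relations reduce any word in the generators to PBW normal form, so the abstract algebra they define surjects onto $U_0$, and by comparing bases the surjection is an isomorphism. To define $\tau_i$ as an algebra endomorphism it therefore suffices to check that the prescribed images respect this one commutator. Writing $A:=eq^2+i(pq)+\tfrac{i(i+1)}{2}z$ and $B:=pq+iz$, the added summands in $A$ and $B$ are central, so $[A,B]=[eq^2,pq]$, and a direct expansion — tracking the coefficients of $z^2$, $z(pq)$ and $(pq)^2$ — shows this equals $-2zA+zB+B^{2}$, giving the required identity.

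Finally, to upgrade $\tau_i$ from an endomorphism to an automorphism, I would exhibit its inverse as $\tau_{-i}$, obtained by replacing $i$ with $-i$ in the formulas. Checking $\tau_{-i}\circ\tau_i=\mathrm{id}$ on each of the four generators is immediate for $pq$, $h$, $z$, and for $eq^2$ reduces to the arithmetic identity $\tfrac{i(i+1)}{2}+\tfrac{(-i)(-i+1)}{2}-i^{2}=0$. (As a byproduct, the analogous check $\tau_{i}\circ\tau_{j}=\tau_{i+j}$ upgrades the family to a group homomorphism $\mathbb{Z}\to\Aut(U_0)$, although the lemma does not ask for this.) There is no conceptual obstacle; the only thing requiring care is the bookkeeping of the quadratic term $\tfrac{i(i+1)}{2}z$ in the formula for $\tau_i(eq^2)$, which is precisely the correction forced by the $(pq)^2$ on the right-hand side of the distinguished relation.
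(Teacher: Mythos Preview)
Your proof is correct and follows essentially the same approach as the paper: both use the PBW presentation of $U_0$ from Lemma~\ref{lem2}(a) and verify that $\tau_i$ respects the single nontrivial commutator $[eq^2,pq]=-2z(eq^2)+z(pq)+(pq)^2$, the other relations being trivial since $h$ and $z$ are central in $U_0$. Your write-up is in fact slightly more complete than the paper's, since you explicitly exhibit $\tau_{-i}$ as a two-sided inverse to justify the word ``automorphism'' (the paper is silent on this point), and your remark about the conceptual origin via conjugation by $q^i$ is a helpful addition.
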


\begin{proof}  From Lemma 2.1(a), we know that  $U_0$ is a PBW-algebra on the generating set $\{eq^2,pq, h, z\}$.  One can find the concept of  PBW-algebras in  \cite{BTLC}. Then  we need only to verify the  relations on the generating elements $eq^2$, $pq$, $h$, and $z$.
We have
\begin{displaymath}
\begin{array}{rcl}
[\tau_i(eq^2),\tau_i(pq)]&=&[eq^2+ipq+\frac{i(i+1)}{2}z,pq+iz]\\&=&[eq^2,pq]\\
&=&-2zeq^2+zpq+(pq)^2
\end{array}
\end{displaymath}
while
\begin{displaymath}
\begin{array}{rcl}
\tau_i([eq^2,pq])&=&\tau_i(-2zeq^2+zpq+(pq)^2)\\&
=&-2z(eq^2+ipq+\frac{i(i+1)}{2}z)+z(pq+iz)+(pq+iz)^2\\&=&-2zeq^2+zpq+(pq)^2.
\end{array}
\end{displaymath}
Thus $[\tau_i(eq^2),\tau_i(pq)]=\tau_i([eq^2,pq])$. All other relations are checked similarly.
\end{proof}

The proof of the next lemma is a straightforward computation which is left to the reader.
Here we use the embedding of  $R_{\dot z}$ in the first Weyl algebra $\mathbf{A}$ as in the paragraph before Lemma 2.

\begin{lem}\label{lem5}
For any $i\in \Z$ and $\dot{z}\in \C$ there is a unique $\tau_{i,\dot{z}}\in \Aut(R_{\dot z})$ such that
\begin{gather*}
\tau_{i, 0}(t)=t,\,\,\,\,\,\, \tau_{i, 0}(t^2\frac{d}{d t})=t^2\frac{d}{d t}+it \,\,\,\,\text{if} \,\,\,\,\dot{z}= 0;\\
\tau_{i,\dot{z}}(t)=t,\,\,\,\,\,\, \tau_{i,\dot{z}}( t\frac{d} {d t})=t\frac{d} {d t}+\frac{i}{2}
\,\,\,\,\,\,\text{if}\,\, \dot{z}\ne 0.
\end{gather*}
\end{lem}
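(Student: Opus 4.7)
The plan is to reduce the existence and uniqueness of $\tau_{i,\dot z}$ to a presentation of $R_{\dot z}$ by generators and a single commutation relation, together with a relation check and the construction of an explicit inverse.

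First, I would record the presentation of $R_{\dot z}$. Setting $X := t^2\tfrac{d}{dt}$, the algebra $R_0$ is generated by $t$ and $X$ subject to the single relation $[X,t]=t^2$; setting $Y := t\tfrac{d}{dt}$, the algebra $R_1$ is generated by $t$ and $Y$ subject to $[Y,t]=t$. That these relations furnish a complete presentation is a standard PBW-type statement (of the same flavour as the one invoked in the proof of Lemma~\ref{lem2}): the abstract algebras so defined admit bases $\{t^i X^j\}$ and $\{t^i Y^j\}$ respectively, and the surjection onto $R_{\dot z}$ induced by the generators is easily seen to be injective by comparing with images in the first Weyl algebra $\mathbf{A}$. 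Justifying this presentation is the one conceptual (as opposed to computational) point in the argument.

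Granted the presentation, to produce $\tau_{i,\dot z}$ as a well-defined endomorphism, it suffices to check that the prescribed images of the generators satisfy the defining relation. For $\dot z=0$ one computes
\[
[X+it,\,t] \;=\; [X,t] + i[t,t] \;=\; t^2,
\]
which matches $\tau_{i,0}([X,t])=\tau_{i,0}(t^2)=t^2$. For $\dot z\neq 0$ one computes
\[
[Y+i/2,\,t] \;=\; [Y,t] \;=\; t,
\]
which matches $\tau_{i,\dot z}([Y,t])=\tau_{i,\dot z}(t)=t$. The universal property of the presentation then yields a unique associative algebra endomorphism $\tau_{i,\dot z}$ of $R_{\dot z}$ with the specified values on the generators.

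Finally, to upgrade $\tau_{i,\dot z}$ from an endomorphism to an automorphism, I would observe that $\tau_{-i,\dot z}$ is defined by the same argument and serves as its two-sided inverse. Indeed, the composite $\tau_{i,\dot z}\circ\tau_{-i,\dot z}$ fixes $t$ and sends $X$ to $\tau_{i,0}(X-it)=X+it-it=X$ in the $\dot z=0$ case, and sends $Y$ to $\tau_{i,\dot z}(Y-i/2)=Y+i/2-i/2=Y$ in the $\dot z\neq 0$ case; since generators are fixed, the composite is the identity, and symmetrically for the other order. This completes the proof plan; as the authors remark, once the presentation is in hand, everything else is a short direct calculation.
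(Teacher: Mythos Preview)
Your proposal is correct and is exactly the natural way to supply the ``straightforward computation'' that the paper explicitly leaves to the reader; the paper gives no proof of its own for this lemma. Your identification of the presentation of $R_{\dot z}$ as the only nontrivial ingredient is apt, and your treatment mirrors the PBW-style argument the authors use for Lemma~\ref{lem4}.
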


The next step is to construct some weight $\fa$-modules using $R_{\dot z}$-modules.  From now on in this section
$N$ denotes an $R_{\dot z}$-module. We define the $\fa$-module $N_{\dot{z},\dot{h}}$ as follows: as a vector space
we set $N_{\dot{z},\dot{h}}=N\otimes \mathbb{C}[x,x^{-1}]$ and then for $v\in N$ define
\begin{equation}\label{A-1}q(v\otimes x^i)=v\otimes x^{i+1},\,\,\,\,\,\, p(v\otimes x^i)
=(\phi_{\dot{z},\dot{h}}(pq+(i-1)z) v)\otimes x^{i-1},\end{equation}
\begin{equation}\label{A-2}e (v\otimes x^{i})=(\phi_{\dot{z},
\dot{h}}(eq^2+(i-2)pq+\frac{(i-1)(i-2)}{2}z)v)\otimes x^{i-2},\end{equation}
\begin{equation}\label{A-3}h (v\otimes x^i)=((\dot{h}-i) v)\otimes x^i,\,\,\,\,\,\,
z(v\otimes x^i)=(\dot{z} v)\otimes x^i. \end{equation}
By a straightforward computation we get the following:

\begin{lem}\label{lem6}
{\hspace{2mm}}

\begin{enumerate}[$($a$)$]
\item\label{lem6.1}
Formulae \eqref{A-1}, \eqref{A-2}  and \eqref{A-3} define on $N_{\dot{z},\dot{h}}$  the structure of an $\fa$-module.
\item\label{lem6.2}
For $\dot{z}=0$ the action in \eqref{A-1}, \eqref{A-2}  and \eqref{A-3} reads as follows
{\small
\begin{equation}\label{A-01}
\left\{\aligned&h (v\otimes x^i)=(\dot{h}-i) v\otimes x^i,\,\,\,\,z(v\otimes x^i)=0, \\
 & q (v\otimes x^i)=v\otimes x^{i+1},\,\,\,\, p(v\otimes x^i)=(t v)\otimes x^{i-1},\\
& e (v\otimes x^{i})=((t^2\frac{d}{d t}+(i-2)t)v)\otimes x^{i-2}.
\endaligned\right.
\end{equation}
}
\item\label{lem6.3}
For $\dot{z}\neq 0$ the action in \eqref{A-1}, \eqref{A-2}  and \eqref{A-3} reads as follows
{\footnotesize
\begin{equation}\label{A-11}
\left\{\aligned&h (v\otimes x^i)=(\dot{h}-i) v\otimes x^i,\,\,\,\,z(v\otimes x^i)=\dot{z} v\otimes x^i, \\
 & q (v\otimes x^i)=v\otimes x^{i+1},\,\,\,\,
 p(v\otimes x^i)=((2\dot{z} t\frac{d}{d t}+(i-1)\dot{z}) v)\otimes x^{i-1},\\
&e (v\otimes x^{i})=((t+(2i-3)\dot{z}t\frac{d}{d t}+2\dot{z}(t\frac{d}{d t})^2+\frac{(i-1)(i-2)}{2}\dot{z})v)
\otimes x^{i-2}.\\
\endaligned
\right.
\end{equation}
}
\end{enumerate}
\end{lem}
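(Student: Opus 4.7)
My plan is to prove part~(a) by verifying each defining relation of $\fa$ directly on a typical vector $v \otimes x^i$, then to deduce (b) and (c) by substituting the explicit images of $pq$ and $eq^2$ from Lemma~\ref{lem2}\eqref{lem2.2} into \eqref{A-1}--\eqref{A-3}. The guiding observation is that, on the layer $N \otimes x^i$, the formulas are engineered so that the elements $pq$ and $eq^2$ of $U_0$ act as $\phi_{\dot{z}, \dot{h}}(\tau_i(pq))$ and $\phi_{\dot{z}, \dot{h}}(\tau_i(eq^2))$ respectively, with $\tau_i$ the automorphism from Lemma~\ref{lem4}.

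The relations $[h, q] = -q$, $[h, p] = p$, $[h, e] = 2e$ together with the centrality of $z$ are immediate, since $h$ acts as the scalar $\dot{h} - i$ and $z$ as $\dot{z}$ on each $N \otimes x^i$. The relation $[p, q] = z$ amounts to $\phi_{\dot{z}, \dot{h}}\bigl((pq + i z) - (pq + (i{-}1) z)\bigr) = \dot{z}$. The relation $[e, q] = p$ follows from the identity
\[
\bigl(eq^2 + (i{-}1)\,pq + \tfrac{i(i{-}1)}{2} z\bigr) - \bigl(eq^2 + (i{-}2)\,pq + \tfrac{(i{-}1)(i{-}2)}{2} z\bigr) = pq + (i{-}1) z
\]
in $U_0$, whose image under $\phi_{\dot{z}, \dot{h}}$ is exactly the prescription for $p$ on $N \otimes x^i$.

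The only relation with genuine content, and the main obstacle, is $[e, p] = 0$. Computing $ep(v \otimes x^i)$ and $pe(v \otimes x^i)$ and using that $\phi_{\dot{z}, \dot{h}}$ is an algebra homomorphism, the assertion reduces to showing that
\[
\bigl(eq^2 + (i{-}3)\,pq + \tfrac{(i{-}2)(i{-}3)}{2} z\bigr)\bigl(pq + (i{-}1) z\bigr) = \bigl(pq + (i{-}3) z\bigr)\bigl(eq^2 + (i{-}2)\,pq + \tfrac{(i{-}1)(i{-}2)}{2} z\bigr)
\]
holds already in $U_0$. Expanding and collecting like terms, the difference of the two sides equals $[eq^2, pq] - (pq)^2 + 2 z \cdot eq^2 - z \cdot pq$, which vanishes by the commutator identity $[eq^2, pq] = -2 z \cdot eq^2 + z \cdot pq + (pq)^2$ derived in the proof of Lemma~\ref{lem2}. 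This is the one step where the construction draws on more than bookkeeping.

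Once (a) is in hand, parts (b) and (c) are pure substitution. When $\dot{z} = 0$, every summand containing $z$ in \eqref{A-1}--\eqref{A-3} vanishes, and plugging in $\phi_{0, \dot{h}}(pq) = t$ and $\phi_{0, \dot{h}}(eq^2) = t^2 \tfrac{d}{dt}$ immediately yields \eqref{A-01}. When $\dot{z} \neq 0$, the summand $(i{-}2)\,pq$ inside $e(v \otimes x^i)$ contributes $2(i{-}2) \dot{z}\, t \tfrac{d}{dt}$, which combines with the $\dot{z}\, t \tfrac{d}{dt}$ coming from $\phi_{\dot{z}, \dot{h}}(eq^2)$ to produce the coefficient $(2i{-}3) \dot{z}$ of $t \tfrac{d}{dt}$ in \eqref{A-11}; all remaining terms fall into place by inspection.
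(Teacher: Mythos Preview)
Your proof is correct and follows exactly the approach the paper intends: the paper records only ``By a straightforward computation we get the following'', and you have supplied that computation. In particular, your reduction of $[e,p]=0$ to the identity $[eq^2,pq]=-2z\,eq^2+z\,pq+(pq)^2$ in $U_0$ is precisely the heart of the verification, and your substitutions for parts~(b) and~(c) are correct.
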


For an associative algebra $A$, an $A$-module $V$ and $\sigma\in \Aut(A)$, we denote by $V^{\sigma}$
the module obtained from $V$ via twisting the action of $A$ by $\sigma$, that is $a\cdot v=\sigma(a)v$
for $a\in A$ and $v\in V$.

For each $i\in \Z$ the space $N\otimes x^i$ is naturally a $U_0$-module. We may even make
$N\otimes x^i$ into an $R_{\dot z}$-module as follows:

If $\dot{z}=0$, then for any $ v\in N$, we set
\begin{equation} \label{z0} \aligned
&t(v\otimes x^i)=(pq)(v\otimes x^i)=(tv)\otimes x^i,\\
&(t^2\frac d{dt})(v\otimes x^i)=(eq^2)(v\otimes x^i)=((t^2\frac d{dt}+it)v)\otimes x^i.
\endaligned
\end{equation}

If $\dot{z}\ne0$, then for any $ v\in N$, we set
\begin{equation} \label{z1} \aligned
&t(v\otimes x^i)=(eq^2-\frac{pq}{2}-2\dot{z}(\frac{pq}{2\dot{z}})^2)(v\otimes x^i)=(tv)\otimes x^i,\\
&(t\frac d{dt})(v\otimes x^i)=(\frac{pq}{2\dot{z}})(v\otimes x^i)=((t\frac
d{dt}+\frac i2)v)\otimes x^i.
\endaligned
\end{equation}

The following standard result asserts that any simple weight module is
uniquely determined by any of its nonzero weight spaces.

\begin{lem} \label{lemma-2.4}
Let $\mathfrak{g}$ be $\fa$ or $\mathcal{S}$. Let $V$ and $W$ be simple weight $\mathfrak{g}$-modules such that
for some $\dot{h}\in \supp(V)$ the $U(\mathfrak{g})_0$ modules $V_{\dot{h}}$ and $W_{\dot{h}}$ are isomorphic.
Then $V\cong W$.
\end{lem}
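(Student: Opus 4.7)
The plan is to realize both $V$ and $W$ as canonical quotients of the same induced module and then use the given $U_0$-isomorphism $\phi\colon V_{\dot h}\xrightarrow{\sim} W_{\dot h}$ to match these quotients. Concretely, set
\[
\widetilde V := U(\mathfrak{g})\otimes_{U(\mathfrak{g})_0} V_{\dot h}, \qquad
\widetilde W := U(\mathfrak{g})\otimes_{U(\mathfrak{g})_0} W_{\dot h},
\]
and note that $\phi$ immediately induces an isomorphism $\widetilde\phi\colon\widetilde V\to\widetilde W$ of $U(\mathfrak{g})$-modules. So it suffices to prove that $V$ (respectively $W$) is the quotient of $\widetilde V$ (respectively $\widetilde W$) by a canonically determined submodule depending only on the $U(\mathfrak{g})_0$-module structure of $V_{\dot h}$.

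The algebra $U(\mathfrak{g})$ is $\Z$-graded by $\ad h$-eigenvalues (the generators $h,z,p,q,e,f$ have integer $\ad h$-weights), and its degree-zero part is exactly $U(\mathfrak{g})_0$. Hence $\widetilde V=\bigoplus_{n\in\Z}U(\mathfrak{g})_n\otimes_{U(\mathfrak{g})_0}V_{\dot h}$ is a weight $\mathfrak{g}$-module with $\supp(\widetilde V)\subset\dot h+\Z$, and the natural map $V_{\dot h}\hookrightarrow\widetilde V$, $v\mapsto 1\otimes v$, identifies $V_{\dot h}$ with the $\dot h$-weight space of $\widetilde V$. Since $V$ is simple and $V_{\dot h}\ne 0$, the space $V_{\dot h}$ generates $V$ over $U(\mathfrak{g})$, so the universal property of induced modules yields a surjective $U(\mathfrak{g})$-homomorphism $\pi\colon\widetilde V\twoheadrightarrow V$ which is the identity on the $\dot h$-weight space. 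Let $K:=\ker\pi$.

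The key step is to show that $K$ is the unique maximal $U(\mathfrak{g})$-submodule of $\widetilde V$ whose $\dot h$-weight space is trivial. First, $K\cap\widetilde V_{\dot h}=0$ since $\pi$ is the identity on $V_{\dot h}=\widetilde V_{\dot h}$. Second, any submodule of a weight module is itself a weight module, so if $K'\subseteq\widetilde V$ is a submodule with $K'\cap\widetilde V_{\dot h}=0$ and $K'\not\subseteq K$, then $\pi(K')$ is a nonzero submodule of the simple module $V$, hence equals $V$; but then $\pi(K')\supseteq V_{\dot h}$, so there would exist $x\in K'$ with $\pi(x)\in V_{\dot h}\setminus\{0\}$, and after extracting the $\dot h$-weight component (which still lies in $K'$ by the grading) this contradicts $K'\cap\widetilde V_{\dot h}=0$. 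The same discussion applies to $W$, giving a canonical surjection $\widetilde W\twoheadrightarrow W$ whose kernel $K_W$ is the analogous maximal submodule. Because $\widetilde\phi$ is a graded $U(\mathfrak{g})$-isomorphism restricting to $\phi$ on the $\dot h$-weight spaces, it sends the maximal submodule with trivial $\dot h$-component in $\widetilde V$ to the one in $\widetilde W$, i.e.\ $\widetilde\phi(K)=K_W$. Passing to quotients yields the desired isomorphism $V\cong\widetilde V/K\cong\widetilde W/K_W\cong W$. The only non-routine point is the maximality argument for $K$, and that rests entirely on the weight-grading of submodules, which is automatic here.
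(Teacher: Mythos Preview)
Your proof is correct and follows essentially the same approach as the paper: form the induced module $U(\mathfrak{g})\otimes_{U(\mathfrak{g})_0}V_{\dot h}$, observe it is a weight module whose $\dot h$-weight space is $V_{\dot h}$, and show it has a unique maximal submodule with trivial $\dot h$-component, so that both $V$ and $W$ are its unique simple quotient. The paper compresses the maximality argument into the phrase ``standard arguments,'' whereas you spell it out explicitly; otherwise the two proofs are the same.
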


\begin{proof}
Set $N=V_{\dot{h}}=W_{\dot{h}}$ and write
$$U(\mathfrak{g})=\bigoplus_{i\in\Z}U_i\quad\text{where}\quad
U_i=\{x\in U(\mathfrak{g}) |[h,y]=iy\}.$$
Every $U_i$ is a $U(\mathfrak{g})_0\text{-}U(\mathfrak{g})_0$-bimodule.
Consider the induced module
\begin{displaymath}
M:=\mathrm{Ind}_{U(\mathfrak{g})_0}^{U(\mathfrak{g})}N=
U(\mathfrak{g})\bigotimes_{U(\mathfrak{g})_0} N=
\bigoplus_{i\in\Z}({U_i}\otimes_{U(\mathfrak{g})_0} N).
\end{displaymath}
Then, by adjunction, both $V$ and $W$ are simple quotients of  $M$. The module $M$ is, clearly,  a weight
$\mathfrak{g}$-module with $M_{\dot{h}}\cong N$, the latter being a simple $U(\mathfrak{g})_0$-module,
which implies, using standard arguments, that  $M$ has a  unique maximal submodule $K$ satisfying $K_{\dot{h}}=0$
and thus the unique corresponding simple quotient $M/K$. As both $V$ and $W$ have to be quotients of $M/K$
by adjunction, we get $V\cong W$.
\end{proof}

Now we list some properties of the $\fa$-module $N_{\dot{z},\dot{h}}$.

\begin{lem}\label{lemma-2.5}
{\hspace{2mm}}

\begin{enumerate}[$($a$)$]
\item\label{lemma-2.5.1}
For every $i\in\Z$ the $U_0$-modules $N\otimes x^i$ and $N^{\tau_i}$ are isomorphic.
\item\label{lemma-2.5.2}
The $\fa$-module  $N_{\dot{z},\dot{h}}$ is simple if and only if $N$ is a simple $R_{\dot{z}}$-mo\-dule and
one of the following conditions is satisfied:
\begin{enumerate}[$($i$)$]
\item\label{lemma-2.5.2.1}
$\dot{z}=0$ and $(\C t+\C t^2\frac{d}{d t})N\ne 0$;
\item\label{lemma-2.5.2.2}
$\dot{z}\ne 0$ and $\tau_{i,\dot{z}}(\C t+\C t\frac{d}{d t})N\ne0$ for all $i\in \Z$.
\end{enumerate}
\item\label{lemma-2.5.3}
Let $\dot{z},\dot{h}, \dot{z}',\dot{h}'\in\C$. Let further $N$ be an $R_{\dot{z}}$-module
and $N'$ be an $R_{\dot{z}'}$-module. Assume that $N_{\dot{z},\dot{h}}$ and $N'_{\dot{z}',\dot{h}'}$ are simple.
Then we have $N_{\dot{z},\dot{h}}\cong
N'_{\dot{z}',\dot{h}'}$  if and only if $\dot{z}=\dot{z}'$,
$i:=\dot{h}-\dot{h}'\in \Z$ and $N'\cong N^{\tau_{i,\dot{z}}}$ as
$R_{\dot{z}}$-modules.
\end{enumerate}
\end{lem}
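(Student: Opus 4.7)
My approach exploits that the $\fa$-action on each weight space $N\otimes x^i$ of $N_{\dot z,\dot h}$ is governed by $\phi_{\dot z,\dot h}$ and the automorphisms $\tau_i$, $\tau_{i,\dot z}$. For part (a), I would use the $\C$-linear identification $v\otimes x^i\mapsto v$. Since $U_0$ is generated by $eq^2$, $pq$, $h$, $z$ by Lemma~\ref{lem2}(a), one only has to verify that each of these four elements acts on $N\otimes x^i$ as $\tau_i$ prescribes on $N$. The cases of $h$, $z$ are immediate from \eqref{A-3} using $\tau_i(h)=h-i$, $\tau_i(z)=z$. For $pq$, \eqref{A-1} gives $(pq)(v\otimes x^i)=p(v\otimes x^{i+1})=\phi_{\dot z,\dot h}(pq+iz)v\otimes x^i$, matching $\tau_i(pq)=pq+iz$. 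For $eq^2$, a similar computation via \eqref{A-1}, \eqref{A-2} produces $\phi_{\dot z,\dot h}(eq^2+i\,pq+\tfrac{i(i+1)}{2}z)v\otimes x^i=\phi_{\dot z,\dot h}(\tau_i(eq^2))v\otimes x^i$.

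For part (b), every $\fa$-submodule $M$ of $N_{\dot z,\dot h}$ is $h$-graded, so $M=\bigoplus_i(M\cap(N\otimes x^i))$. When $N$ is $R_{\dot z}$-simple, part (a) together with Lemma~\ref{lem2}(d) makes each $N\otimes x^i$ a simple $U_0$-module, so $I:=\{i:M\cap(N\otimes x^i)\neq0\}$ completely determines $M$. Since $q$ bijects $N\otimes x^i$ onto $N\otimes x^{i+1}$, $I$ is closed under $+1$; hence $M$ is proper and nonzero iff $I$ has a minimum $j_0$, and this forces both $p$ and $e$ to annihilate $N\otimes x^{j_0}$. From \eqref{A-01}, for $\dot z=0$ this reads $tN=0$ and $(t^2\tfrac{d}{dt})N=0$ independently of $j_0$, i.e.\ $(\C t+\C t^2\tfrac{d}{dt})N=0$. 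From \eqref{A-11}, for $\dot z\neq0$, vanishing of $p$ forces $(t\tfrac{d}{dt}+(j_0-1)/2)N=0$; substituting this back into the $e$-formula, the $t\tfrac{d}{dt}$ and $(t\tfrac{d}{dt})^2$ contributions cancel via the identity $-(2j_0-3)+(j_0-1)+(j_0-2)=0$, leaving only $tN=0$, so together these give $\tau_{j_0-1,\dot z}(\C t+\C t\tfrac{d}{dt})N=0$. Conversely, if $N$ is not $R_{\dot z}$-simple, any proper submodule $N'\subsetneq N$ yields a proper $\fa$-submodule $N'\otimes\C[x,x^{-1}]$, since the actions in \eqref{A-1}--\eqref{A-3} only use operators from $\phi_{\dot z,\dot h}(U_0)=R_{\dot z}$.

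For part (c), if $N_{\dot z,\dot h}\cong N'_{\dot z',\dot h'}$ then matching $z$-eigenvalues forces $\dot z=\dot z'$ and matching supports forces $i:=\dot h-\dot h'\in\Z$. Restricting the isomorphism to the common weight $\dot h-i_0$ gives a $U_0$-isomorphism $N\otimes x^{i_0}\cong N'\otimes x^{i_0-i}$; via part (a) and Lemma~\ref{lem2}(d) (or directly by reading off \eqref{z0}, \eqref{z1}) this translates into an $R_{\dot z}$-isomorphism $N^{\tau_{i_0,\dot z}}\cong(N')^{\tau_{i_0-i,\dot z}}$, from which $N'\cong N^{\tau_{i,\dot z}}$ follows by the easily verified group law $\tau_{a,\dot z}\tau_{b,\dot z}=\tau_{a+b,\dot z}$. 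Conversely, given $N'\cong N^{\tau_{i,\dot z}}$, part (a) supplies a $U_0$-isomorphism between the weight space of $N_{\dot z,\dot h}$ at $\dot h-i$ and the weight space of $N'_{\dot z,\dot h-i}$ at its top, and Lemma~\ref{lemma-2.4} then promotes this to the desired $\fa$-isomorphism.

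The main obstacle will be the numerical cancellation in the $\dot z\neq0$ case of part (b): it is not clear a priori that joint annihilation by $p$ and $e$ on $N\otimes x^{j_0}$ should collapse to the single twisted condition $\tau_{j_0-1,\dot z}(\C t+\C t\tfrac{d}{dt})N=0$, and it is precisely the vanishing of the coefficient polynomial above that makes condition (ii) take this clean form.
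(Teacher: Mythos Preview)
Your arguments for parts (a) and (c) are correct and essentially identical to the paper's: the same linear identification $v\otimes x^i\mapsto v$ and generator check for (a), and the same combination of weight-space matching with Lemma~\ref{lemma-2.4} for (c).

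For part (b), your proof of the implication ``$N$ simple and condition (i)/(ii) holds $\Rightarrow$ $N_{\dot z,\dot h}$ simple'' coincides with the paper's Cases~2 and~4: a proper nonzero submodule has a minimal occupied weight $j_0$, forcing $p$ and $e$ to annihilate $N\otimes x^{j_0}$, which your cancellation identity shows is precisely the failure of the stated condition. However, you do not address the converse direction. Your sentence ``$M$ is proper and nonzero iff $I$ has a minimum $j_0$, and this \emph{forces} \ldots'' only yields the forward implication (existence of such $M$ $\Rightarrow$ $p,e$ annihilate at $j_0$); you never verify that when the condition fails---equivalently, when $p$ and $e$ annihilate $N\otimes x^{j_0}$ for some $j_0$---a proper submodule actually exists. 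The subtlety is that $\bigoplus_{i\ge j_0}N\otimes x^i$ is a submodule only if $e$ also annihilates $N\otimes x^{j_0+1}$, since $e$ lowers weight by~$2$. This is in fact automatic from $[e,q]=p$, via
\[
e(v\otimes x^{j_0+1})=eq(v\otimes x^{j_0})=(qe+p)(v\otimes x^{j_0})=0,
\]
or equivalently from the standard fact that a highest weight vector generates a proper submodule. The paper handles this direction explicitly in its Cases~1 and~3 by exhibiting the highest weight vector. Once you insert this one line, your argument is complete and matches the paper's.
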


\begin{proof}
Let $\psi:N\otimes x^i\rightarrow N^{\tau_i}$ be the linear map defined as
$\psi(v\otimes x^i)=v$ for all $v\in A$. It is straightforward to verify that
$$\aligned &\psi(pq (v\otimes x^i))=\psi(((pq+i\dot{z})v)\otimes x^i)=(pq+iz)v=\tau_i(pq) v,\\
&\psi((eq^2) (v\otimes x^i))=\psi((eq^2+ipq+\frac{i(i+1)}{2}\dot{z})v\otimes x^i)\\&=(eq^2+ipq+\frac{i(i+1)}
{2}\dot{z})v=\tau_i(eq^2)\psi(v\otimes x^i),\\
&\psi(h (v\otimes x^i))=\psi((\dot{h}-i)v\otimes x^i)=(\dot{h}-i)v=\tau_i(h) \psi(v\otimes x^i),\\
&\psi(z (v\otimes x^i))=\psi(\dot{z}v\otimes x^i)=\tau_i(z) \psi(v\otimes x^i).\endaligned$$
This implies claim~\eqref{lemma-2.5.1}.

To prove claim~\eqref{lemma-2.5.2}, we observe that simplicity of $N_{\dot{z},\dot{h}}$ clearly requires simplicity of
$N$. Now suppose that $N$ is a simple $R_{\dot{z}}$-module. We study simplicity of $N_{\dot{z},\dot{h}}$ using
a case-by-case analysis.

{\bf Case~1.} Assume $\dot{z}=0$ and $(\C t+\C t^2\frac{d}{d t})N= 0$.

In this case, from \eqref{A-01} we get $e N_{0,\dot{h}}=pN_{0,\dot{h}}=0$. Hence each
nonzero weight element of $N_{0,\dot{h}}$ generates a proper highest weight submodule.

{\bf Case~2.} Assume $\dot{z}=0$ and $(\C t+\C t^2\frac{d}{d t})N\ne 0$.

Since $N$ is a simple $R_{\dot{z}}$-module, we have $(\C t+\C t^2\frac{d}{d t})v\ne 0$ for any nonzero $v\in N$.
From \eqref{A-01} it follows that for each $i\in \Z$ we either have $p(v\otimes x^i)\ne 0$ or $e(v\otimes x^{j})\ne 0$.
As the action of $q$ on $N_{0,\dot{h}}$ is injective, it follows that any nonzero submodule $V$ of $N_{0,\dot{h}}$
has support $\dot{h}+\Z$.  Now from claim~\eqref{lemma-2.5.1} we get $V=N_{0,\dot{h}}$, that is
any nonzero submodule $V$ coincides with $N_{0,\dot{h}}$ and thus $N_{0,\dot{h}}$ is simple.

{\bf Case~3.} Assume $\dot{z}\ne 0$ and $\tau_{i,\dot{z}}(\C t+\C t\frac{d}{d t})N=0$ for some $i\in \Z$.

In this case, we have $N=\C v$ with $t v=0, (t\frac{d} {dt})v=-\frac{i}{2}v$. From \eqref{A-11}, we have
$p(v\otimes x^{i+1})=e(v\otimes x^{i+1})=0$ and hence $v\otimes x^{i+1}$ generates a proper highest weight submodule
of $N_{\dot{z},\dot{h}}$. Therefore $N_{\dot{z},\dot{h}}$ is not simple.

{\bf Case~4.} Assume $\dot{z}\ne 0$ and $\tau_{i,\dot{z}}(\C t+\C t\frac{d}{d t})N\ne0$ for all $i\in \Z$.

Since $N$ is a simple $R_{\dot{z}}$-module, have $\tau_{i,\dot{z}}(\C t+\C t\frac{d}{d t})v\ne 0$ for any nonzero
$v\in N$. From \eqref{A-11} it follows that for each $i\in \Z$ we have either $p (vx^{i+1})\ne 0$ or
$e(vx^{i+2})\ne 0$ for some $v\in N$. Similarly to Case~2 we deduce that $N_{\dot{z},\dot{h}}$ is simple.
This completes the proof of claim~\eqref{lemma-2.5.2}.

To prove claim~\eqref{lemma-2.5.3}, first assume $N_{\dot{z},\dot{h}}\cong N'_{\dot{z}',\dot{h}'}$. Then we
have $i=h-h'\in \Z$, $\dot{z}=\dot{z}'$ and  $N'\cong N\otimes x^i$ as $U_0$-modules. From \eqref{z0} and
\eqref{z1} it follows that $A'\cong N^{\tau_{i,\dot{z}}}$ as $R_{\dot{z}}$-modules.

Now suppose that $\dot{z}=\dot{z}'$, $i=\dot{h}-\dot{h}'\in \Z$ and $N'\cong N^{\tau_{i,\dot{z}}}$ as
$R_{\dot{z}}$-modules. From \eqref{z0} and \eqref{z1} it follows that $N'\cong N\otimes x^i$ as $U_0$-modules.
From Lemma \ref{lemma-2.4} we thus get $N_{\dot{z},\dot{h}}\cong N'_{\dot{z}',\dot{h}'}$. This completes the proof.
\end{proof}

\begin{rem}\label{remn1}
{\rm
Let $N$ be a simple $R_{\dot{z}}$-module. If we have $\dot{z}=0$ and $(\C t+\C t^2\frac{d}{d t})N=0$,
then $\dim N=1$. If $\dot{z}\ne 0$ and $\tau_{i,\dot{z}}(\C t+\C t\frac{d}{d t})N=0$ for some $i\in \Z$, then $\dim N=1$.
Thus, if $N$ is an infinite dimensional simple $R_{\dot{z}}$-module, then $N_{\dot{z},\dot{h}}$ is a
simple weight $\fa$-module.
}
\end{rem}

Now we are ready to prove our main result on classification of all
simple weight $\fa$-modules.

\begin{thm}\label{thmmain}
Each simple weight $\fa$-module is isomorphic to one of the following simple modules:
\begin{enumerate}[$($i$)$]
\item\label{thmmain.1}
A simple highest or lowest weight module.
\item\label{thmmain.2}
The module $Q(\lambda,\dot{h})=\C[x,x^{-1}]$, where $\dot{h}\in \C$ and $\lambda\in \C^*$, with the action given by:
$$z x^i=p x^i=0,\,\,\,\, qx^i=x^{i+1},\,\,\,\,ex^i=\lambda x^{i-2},\,\,\,\,h x^i=(\dot{h}-i)x^i.$$
\item\label{thmmain.3}
The module  $Q'(\dot{z},\dot{h},\lambda)=\C[x,x^{-1}]$, where $\dot{h}\in \C,\dot{z}\in \C^*$ and $\lambda \in
\C\backslash \Z$, with the action given by:
$$\aligned &q x^i=x^{i+1},\,\,\,\, p x^i=\dot{z}(\lambda+i)x^{i-1},\,\,\,\,z x^i=
\dot{z} x^i,\\& h x^i=(\dot{h}-i)x^i,\,\,\,\, e
x^i=\frac{\dot{z}}{2}(\lambda+i)(\lambda+i-1)x^{i-2}.\endaligned$$
\item\label{thmmain.4}
The module $N_{\dot{z},\dot{h}}$, where $N$ is an infinite dimensional simple $R_{\dot{z}}$-module.
\end{enumerate}
\end{thm}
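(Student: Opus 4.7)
The plan is to reduce the classification to Block's description of simple $R_{\dot z}$-modules (Lemma \ref{lem1}) by exploiting the isomorphism $U_0/\langle h-\dot h,z-\dot z\rangle\cong R_{\dot z}$ of Lemma \ref{lem2}(d). Let $V$ be a simple weight $\fa$-module, fix $\dot h\in\supp(V)$, and set $N:=V_{\dot h}$. By Schur's lemma $z$ acts on $V$ as a scalar $\dot z$, and by the standard fact recorded in Section \ref{s2} the space $N$ is a simple $U_0$-module; via Lemma \ref{lem2}(d) I view $N$ as a simple $R_{\dot z}$-module. Lemma \ref{lem6} attaches to this datum the weight $\fa$-module $N_{\dot z,\dot h}$ whose $\dot h$-weight space is, as a $U_0$-module, isomorphic to $N$ by Lemma \ref{lemma-2.5}(a); Lemma \ref{lemma-2.4} then forces $V$ to be the unique simple subquotient of $N_{\dot z,\dot h}$ containing $N$ in its $\dot h$-weight space.

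I next split on whether $N_{\dot z,\dot h}$ is itself simple, a question settled by Lemma \ref{lemma-2.5}(b). If $N$ is infinite-dimensional, Remark \ref{remn1} gives simplicity and $V$ lies in class (iv). Otherwise $N$ is one-dimensional; since the commutation relations $[t^2\frac{d}{dt},t]=t^2$ and $[t\frac{d}{dt},t]=t$ forbid $t$ from acting as a nonzero scalar on a one-dimensional module, we have $N=\C v$ with $tv=0$, and by Lemma \ref{lem1}(b) such $N$ is classified by the scalar through which $t^2\frac{d}{dt}$ (in case $\dot z=0$) or $t\frac{d}{dt}$ (in case $\dot z\ne 0$) acts on $v$. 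Substituting these scalars into the action formulae \eqref{A-01}, \eqref{A-11} identifies $N_{\dot z,\dot h}$ explicitly: the $\dot z=0$ case with $t^2\frac{d}{dt}$ acting by $\lambda\in\C^*$ reproduces $Q(\lambda,\dot h)$, yielding class (ii); the $\dot z\ne 0$ case with $t\frac{d}{dt}$ acting by $(\lambda+1)/2$ reproduces $Q'(\dot z,\dot h,\lambda)$, yielding class (iii). The simplicity criterion of Lemma \ref{lemma-2.5}(b) becomes $\lambda\ne 0$ and $\lambda\notin\Z$ respectively; when it fails, inspection of \eqref{A-01} or \eqref{A-11} exhibits some $v\otimes x^j$ annihilated by both $p$ and $e$, so the simple subquotient $V$ is a simple highest weight module, i.e.\ class (i). Lowest weight modules are obtained symmetrically, either through the duality of Remark \ref{remn2} or by running the argument from a different weight.

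The main obstacle is the explicit bookkeeping in the one-dimensional case: matching the scalar parameters on $N$ against the constants appearing in \eqref{A-01}, \eqref{A-11}, and translating the twist condition ``$\tau_{i,\dot z}(\C t+\C t\frac{d}{dt})N\ne 0$ for all $i\in\Z$'' into the clean integrality restriction $\lambda\notin\Z$ of class (iii). A secondary concern is non-redundancy of the final list: different choices of $\dot h\in\supp V$ produce simple $R_{\dot z}$-modules in the same $\tau_{i,\dot z}$-orbit, and one must appeal to Lemma \ref{lemma-2.5}(c) to confirm that the parameter ranges specified in the theorem constitute an exact parameterization.
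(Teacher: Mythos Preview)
Your strategy is sound and, in the infinite-dimensional case, coincides with the paper's. For one-dimensional weight spaces, however, you take a genuinely different route. The paper treats $\dot z=0$ by observing that $p$ (and hence the ideal $\C p+\C z$) annihilates $V$, reducing to the quotient algebra where $\bar e\bar q^2$ is central and invoking Schur's lemma; for $\dot z\ne 0$ it imports a tensor decomposition from \cite{LZ1}. You instead handle both values of $\dot z$ uniformly by recognising the one-dimensional $N$ inside $N_{\dot z,\dot h}$, matching the formulae \eqref{A-01}--\eqref{A-11} against the definitions of $Q(\lambda,\dot h)$ and $Q'(\dot z,\dot h,\lambda)$, and reading off simplicity from Lemma~\ref{lemma-2.5}\eqref{lemma-2.5.2}. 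Your approach is more self-contained (no appeal to \cite{LZ1}) and structurally cleaner, at the price of the explicit parameter-matching you flag as the ``main obstacle''; the paper's argument is quicker for $\dot z=0$ but relies on an external black box for $\dot z\ne 0$.

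There is one imprecision in your degenerate case. When $N_{\dot z,\dot h}$ fails to be simple you locate a vector $v\otimes x^{j}$ annihilated by $p$ and $e$ and conclude that $V$ is a simple \emph{highest} weight module. That inference is not valid: the highest weight vector generates a submodule $W=\bigoplus_{i\ge j}\C(v\otimes x^i)$ of $N_{\dot z,\dot h}$, but the simple subquotient containing the $\dot h$-weight space is $W$ only when $j\le 0$; when $j>0$ it is instead the quotient $N_{\dot z,\dot h}/W$, which is a \emph{lowest} weight module (indeed $q(v\otimes x^{j-1})$ vanishes there). Either way $V$ lands in class~\eqref{thmmain.1}, so your conclusion stands, but the sentence ``so the simple subquotient $V$ is a simple highest weight module'' should read ``highest or lowest weight'', and your final remark about obtaining lowest weight modules ``symmetrically'' then becomes unnecessary rather than an additional step.
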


\begin{proof}
It is straightforward to verify that $Q(\lambda,\dot{h})$ in \eqref{thmmain.2} and $Q'(\dot{z},\dot{h},\lambda)$
in \eqref{thmmain.3} are simple $\fa$-modules. From Remark~\ref{remn1} we know that
the module $N_{\dot{z},\dot{h}}$ in \eqref{thmmain.4} is a simple $\fa$-module.

Let $V$ be any simple weight $\fa$-module. Assume that $\dot{h}\in \supp(V)$ and that $z$ acts on $V$ as the
scalar $\dot{z}$. Then $V_{\dot{h}}$ is both, a simple $U_0$-module and a simple $R_{\dot{z}}$-module.
By Lemma~\ref{lem1} we have that $V_{\dot{h}}$ either is  a one-dimensional module with $tV_{\dot{h}}=0$ or
is an infinite dimensional module with $t$ acting bijectively on it. If $N=V_{\dot{h}}$ has
infinite dimension, then $V\cong N_{\dot{z},\dot{h}}$ by Lemma~\ref{lemma-2.4}. Therefore it remains to
consider the case when all nontrivial weight spaces of $V$ have dimension one.

If $V$ has a nonzero element annihilated by $q$, then $V$ is clearly a lowest weight module. Therefore
we may assume that $q$ acts injectively on $V$. Let $0\ne w\in V_{\dot{h}}$ and consider first
the case $\dot z=0$.

In this case, we have $tw=pqw=0$ and hence $pw=0$ since  the action of $q$ is injective. This implies that the
ideal $\C p+\C z$ of $\fa$ annihilates $V$. Let $\mathfrak{a}=\fa/(\C p+\C z)$. Then $V$ is a simple
$\mathfrak{a}$-module. Denote by $\bar{x}$ the image of $x\in \fa$ in $\mathfrak{a}$. Note that
$\bar{e}\bar{q}^2$ is a central element in $U(\mathfrak{a})$. By Schur's lemma, $\bar{e}\bar{q}^2$ acts
on $V$ as a scalar, say  $\lambda\in\mathbb{C}$. If $\lambda=0$, then $e (q^2w)=0$ and $V$ is a highest
weight module. If $\lambda\ne 0$, then it is easy to check that $V\cong Q(\lambda,\dot{h})$.

Finally, consider the case  $\dot z\ne 0$.  In this case,  $w$ generates an $\H$-submodule of $V$ with nonzero central charge
and one-dimensional weight spaces. Consequently, $V$ contains a simple $\C h+\H$ submodule $M$ with nonzero central
charge.  We make $M$ into an $\fa$-module by setting $ev=\frac{1}{2\dot{z}} p^2 v$ and denote the resulting module
by $M^{\fa}$.  Let $\C v$ be the one-dimensional trivial $\C h+\H$-module. Then
\cite[Lemma~8 and Theorem~7]{LZ1} yield that $V$ is isomorphic to a simple quotient of
$$\Ind_{\C  h+\H}^{\fa}(M\otimes \C v)\cong M^{\fa}\otimes \Ind_{\C
h+\H}^{\fa} \C v,$$ which is of the form $M^{\fa}\otimes X$, where $X$ is a simple quotient of
$\Ind_{\C  h+\H}^{\fa} \C v$.  Since $M^{\fa}\otimes X$ is a weight module, we get that $X$ is the
trivial module and hence $V\cong M^{\fa}$. This gives that
$V$ is either a highest weight module or is isomorphic to $Q'(\dot{z},\dot{h},\lambda)$ with
$\lambda\in\C\setminus\Z$. This completes the proof.
\end{proof}

The following result is a direct consequence of Theorem~\ref{thmmain}.

\begin{cor}\label{cor21}
Let $V$ be a simple weight $\fa$-module and $\dot{h}\in \supp(V)$. Then either
$\dim V_{\dot{h}+i}\le 1$ for all $i\in \Z$ or $\dim V_{\dot{h}+i}=\infty$ for
all $i\in \Z$.
\end{cor}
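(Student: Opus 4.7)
The plan is to derive the dichotomy by a case-by-case inspection of the four families of simple weight $\fa$-modules produced by Theorem~\ref{thmmain}, checking the dimensions of weight spaces in each family directly.

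First I would dispose of the two families (\ref{thmmain.2}) and (\ref{thmmain.3}): the modules $Q(\lambda,\dot{h})$ and $Q'(\dot{z},\dot{h},\lambda)$ are defined on the underlying space $\C[x,x^{-1}]$ with $h$ acting by $h\cdot x^i = (\dot{h}-i)x^i$. Each weight space is therefore spanned by a single monomial $x^i$, so $\dim V_{\dot{h}+i} = 1$ (in particular $\le 1$) for every $i\in\Z$. Next, for family (\ref{thmmain.4}), the module $N_{\dot{z},\dot{h}} = N\otimes\C[x,x^{-1}]$ has weight space $N\otimes x^i$ of dimension $\dim N$ at weight $\dot{h}-i$, and by the hypothesis of this case $N$ is an infinite-dimensional simple $R_{\dot{z}}$-module. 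Thus every non-zero weight space of $N_{\dot{z},\dot{h}}$ is infinite-dimensional.

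It remains to handle family (\ref{thmmain.1}), which is where the only real (but still very small) subtlety lies. For a simple highest weight $\fa$-module $V$, $V$ is a quotient of some Verma module $M_{\fa}(\dot{z},\dot{h})$; since $\fa = \mathfrak{n}\oplus\C q$ with $\mathfrak{n}=\C h+\C z+\C e+\C p$, the PBW theorem gives $M_{\fa}(\dot{z},\dot{h})$ the basis $\{q^j\otimes w\}_{j\ge 0}$, so each weight space of the Verma module is at most one-dimensional, and the same holds for its simple quotient $V$. For simple lowest weight modules one cannot argue directly from a PBW-basis of the lowest weight Verma (whose basis $\{e^i p^j\otimes w\}$ produces weight spaces of growing finite dimension), so instead I would invoke Remark~\ref{remn2}: the duality $V\mapsto V^{\star}$ gives a bijection between simple highest and simple lowest weight modules preserving weight-space dimensions, and therefore transports the bound $\dim V_{\dot{h}+i}\le 1$ from the highest weight side to the lowest weight side.

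Combining the three paragraphs, any simple weight $\fa$-module $V$ either falls in families (\ref{thmmain.1})--(\ref{thmmain.3}), in which case $\dim V_{\dot{h}+i}\le 1$ for all $i\in\Z$, or it falls in family (\ref{thmmain.4}), in which case $\dim V_{\dot{h}+i}=\infty$ for all $i$ with $\dot{h}+i\in\supp(V)$; since $N_{\dot{z},\dot{h}}$ has support exactly $\dot{h}+\Z$, the infinite-dimensionality occurs on the whole coset. The only step that is not entirely mechanical is the argument for simple lowest weight modules, and the cleanest way around it is the duality in Remark~\ref{remn2} rather than a direct analysis of the lowest weight Verma module.
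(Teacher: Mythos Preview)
Your proposal is correct and is exactly the approach the paper intends: the paper gives no separate proof but states that the corollary is a direct consequence of Theorem~\ref{thmmain}, and you have simply made that ``direct consequence'' explicit by running through the four families. The only nontrivial point you identify---that simple lowest weight modules also have one-dimensional weight spaces---is indeed most cleanly handled via the duality of Remark~\ref{remn2}, as you do.
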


Next we present a nontrivial example of a simple weight $\fa$-module with infinite dimensional weight spaces.

\begin{exa}\label{ex22}
{\rm
Let $\a=1-t^3(t-1)\frac{d}{dt}\in \mathcal{K}$.
It is easy to see that $\alpha$ is irreducible in $\mathcal{K}$.
Then $N=R_0/(R_0\cap \mathcal{K}\a)$ is a simple $R_0$-module such that
$t$ acts bijectively on $N$. This implies that $N$ is also a simple
$R_1$-module and a simple module over the localization of $\mathbf{A}$ at $t$. In
fact, we have $N=(t-1)^{-1}\C[t,t^{-1}]$ with the following action:
$$\frac{d}{d t}\cdot g(t)=\frac{ d g(t)}{d t}+\frac{g(t)}{t^3(t-1)},\,\,\,\,
t \cdot g(t)=tg(t)\,\,\text{ for all } g(t)\in N. $$ Thus we have the simple
$\fa$-module $N_{\dot{z},\dot{h}}=(t-1)^{-1}\C[t,t^{-1},x,x^{-1}]$ with the action
\begin{gather*}
h:=\dot{h}-\partial_x,\,\,\,\,z:=0,\,\,\,\, q:=x,\,\,\,\, p:=t x^{-1},\\
e:=x^{-2}t\partial_t+\frac{1}{t(t-1)x^2}+tx^{-2}\partial_x-2tx^{-2}  .
\end{gather*}
if $\dot{z}=0$, and the action
\begin{gather*}
h :=\dot{h}-\partial_x,\,\,\,\,z:=\dot{z}, \,\,\,\,
  q:=x,\,\,\,\, p:=x^{-1}\dot{z}(2\partial_t+\frac{2}{t^2(t-1)}+\partial_x-1),\\
e:=\dot{z}x^{-2}\left(\frac{t}{\dot{z}}+(2\partial_x-3)(\partial_t+\frac{1}{t^2(t-1)})
+2\partial_t^2+\frac{4}{t^2(t-1)}\partial_t-\right.\\\left.
\hspace{4cm} \frac{2+2t+6t^2}{t^4(t-1)^2}
+\frac{(\partial_x-1)(\partial_x-2)}{2}\right),
\end{gather*}
if $\dot{z}\ne 0$ (here $\partial_x=x\frac{\partial}{\partial x}$ and $\partial_t=t\frac{\partial}{\partial t}$).
}
\end{exa}

\section{Simple weight $\mathcal{S}$-modules having a simple $\fa$-submodule}\label{s4}

In this section we classify all simple weight $\S$-modules that have a simple $\fa$-submodule.

Let $V$ be a simple weight $\fa$-module. We have the induced weight $\S$-module $H(V):=\Ind_{\fa}^{\S} V$ which
can be identified with $\C[f]\otimes V$ as a vector space. In this section we classify  all simple quotient
$\S$-modules of $H(V)$. From \cite[Corollary~8]{DLMZ}  we know that the center $Z(U(\S))$ of $U(\S)$ equals
$\C[z,c]$, where
\begin{equation}\label{eq55}
c:=(fp^2-eq^2-hpq)-\frac{1}{2}(h^2+h+4fe)z.
\end{equation}
Recall some simple $\S$-module constructed in \cite{LMZ} and \cite{D}. Let $V$ be any simple module over
$\H$ with nonzero central charge $\dot{z}$. The module $V$ becomes an $\S$-module by setting
\begin{equation}\label{eq57}
e v=\frac{1}{2\dot{z}} p^2 v,\,\,\,\,  f v=-\frac{1}{2 \dot{z}}q^2 v,\,\,\,\,  h v=(-\frac{pq}{\dot{z}}+\frac{1}{2})v,
\text{ where }  v\in V.
\end{equation}
The resulting module will be denoted by $V^{\S}$.

Any simple $\sl_2$-module $W$ becomes an $\S$-module by setting $\H W=0$. The resulting module will also
be denoted by $W^{\S}$.

Next let us define a class of simple weight $\S$-modules.  Let $N$ be an infinite dimensional simple
$R_{\dot{z}}$-module. Let $\mathbf{A}_{(t)}$ denote the localization of $\mathbf{A}$ at powers of $t$, which is actually the differential operator algebra $\C[t, t^{-1}, \frac d{dt}]$.
Then $N$ is also naturally an $\mathbf{A}_{(t)}$-module since $t$ acts bijectively on $N$, see
Lemma~\ref{lem1}\eqref{lem1.4}. Let
$\dot{c},\dot{z},\dot{h}\in \C$. Then we have the simple weight $\fa$-module
$N_{\dot{z},\dot{h}}$ given by Lemma~\ref{lem6}. We extend this $\fa$-module
$N_{ \dot{z},\dot{h}}$ to an $\S$-module $N_{\dot{c}, \dot{z},\dot{h}}$ as follows:
For $\dot{z}=0$ we set:
\begin{equation}\label{eq77}
\begin{array}{l}
h (v\otimes x^i)=(\dot{h}-i) v\otimes x^i,\,\,\,\, z(v\otimes x^i)=0, \\
q (v\otimes x^i)=v\otimes x^{i+1},\,\,\,\, p(v\otimes x^i)=(t v)x^{i-1},\\
e (v\otimes x^{i})=((t^2\frac{d}{d t}+(i-2)t)v)\otimes x^{i-2},\\
f(v\otimes x^i)=((\frac{d}{d t}+(\dot{h}-2 )t^{-1}+\dot{c}t^{-2})v)\otimes x^{i+2}.
\end{array}
\end{equation}
For $\dot{z}\neq 0$ we set:
\begin{equation}\label{eq75}
\begin{array}{l}
h (v\otimes x^i)=(\dot{h}-i) v\otimes x^i,\,\,\,\, z(v\otimes x^i)=\dot{z} v\otimes x^i,\\
q (v\otimes x^i)=vx^{i+1},\,\,\,\, p(v\otimes x^i)=((2\dot{z} t\frac{d}{d t}+(i-1)\dot{z}) v)\otimes x^{i-1},\\
e (v\otimes x^{i})=((t+(2i-3)\dot{z}t\frac{d}{d t}+
2\dot{z}(t\frac{d}{d t})^2+\frac{(i-1)(i-2)}{2}\dot{z})v)\otimes x^{i-2},\\
f(v\otimes x^i)=((-\frac{1}{2\dot{z}}-(\dot{h}-\frac{1}{2})\frac{d}{
dt}-t(\frac{d}{dt})^2-(\frac{(\dot{h}-1)(\dot{h}-2)}{4}+\frac{\dot{c}}{2\dot{z}})t^{-1})v)\otimes x^{i+2}.
\end{array}
\end{equation}

\begin{lem}\label{lem15}
Formulae~\eqref{eq77} and \eqref{eq75} indeed define the structure of an $\S$-module, moreover,
the element $c$ acts on $N_{\dot{c}, \dot{z},\dot{h}}$ as the scalar $\dot{c}$.
\end{lem}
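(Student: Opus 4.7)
The plan is to split the statement into two independent verifications. Since $N_{\dot{z},\dot{h}}$ already carries an $\fa$-module structure by Lemma~\ref{lem6}, the only commutation relations from \eqref{commrelations} still requiring checking are those involving $f$, namely $[h,f]=-2f$, $[e,f]=h$, $[f,p]=q$ and $[f,q]=0$. Once these are established, I would compute the action of the central element $c$ of \eqref{eq55} on a pure tensor $v\otimes x^i$ and verify that it equals $\dot c$.

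The relations $[h,f]=-2f$ and $[f,q]=0$ are immediate from the definitions: $f$ shifts $x^i\mapsto x^{i+2}$ (giving $[h,f]=-2f$) and commutes with multiplication by $x$, since $f$ acts through a purely $t$-dependent differential operator tensored with an $x$-shift. The relation $[f,p]=q$ reduces, after matching $x$-powers, to verifying in the localised Weyl algebra $\mathbf{A}_{(t)}$ that the commutator of the $t$-operator part of $f$ with the $t$-operator part of $p$ equals the identity on $v$; this follows from $[\tfrac{d}{dt},t]=1$, from the standard Weyl-algebra identity for $[t(\tfrac{d}{dt})^2,2\dot{z}t\tfrac{d}{dt}]$ in the $\dot z\neq 0$ case, and from the observation that the $t^{-1}$ and $t^{-2}$ summands of $f$ commute with $t$. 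The relation $[e,f]=h$ is the principal step: both $e$ and $f$ are second-order $t$-differential operators and their $x$-shifts cancel, so one must expand the commutator and confirm that all $t$-dependence cancels leaving precisely the scalar $\dot h-i$. Here the auxiliary coefficients $(\dot h-2)t^{-1}$, $(\dot h-\tfrac12)\tfrac{d}{dt}$ and $\tfrac{(\dot h-1)(\dot h-2)}{4}t^{-1}$ in $f$ are exactly what is needed to kill the various $t$-terms arising from commuting second-order operators.

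For the final assertion about $c$, I would substitute the explicit formulas for $e,f,h,p,q,z$ into $c=fp^2-eq^2-hpq-\tfrac12(h^2+h+4fe)z$. Every summand preserves the $x$-degree, so the claim reduces to a single identity in the Weyl algebra in the variable $t$ acting on $v\in N$; the $\dot c t^{-2}$ summand of $f$ in the $\dot z=0$ case (respectively the $\tfrac{\dot c}{2\dot z}t^{-1}$ piece in the $\dot z\neq 0$ case) is the only contribution carrying $\dot c$, and by design it produces exactly the scalar $\dot c$ once all $t$-dependent pieces cancel. The main obstacle is the bookkeeping for the $[e,f]=h$ identity and the $c$-computation in the $\dot z\neq 0$ case, where many cross-terms of the shape $t^{\pm k}(\tfrac{d}{dt})^j$ arise; a systematic way to organise it is to expand everything in the PBW basis $\{t^k,\,t^k\tfrac{d}{dt},\,t^k(\tfrac{d}{dt})^2\}$ of $\mathbf{A}_{(t)}$ and compare coefficients, and as a cross-check one may test the identities first on the free $\mathbf{A}_{(t)}$-module $\C[t,t^{-1}]$, since all operators in \eqref{eq77}--\eqref{eq75} are $\mathbf{A}_{(t)}$-natural in $N$.
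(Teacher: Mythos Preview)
Your proposal is correct and follows essentially the same direct-verification approach as the paper. The one organisational difference worth noting: the paper first \emph{derives} the formula for $f$ by solving the equation $c=\dot c$ for $f$ (using that the coefficient of $f$ in $c$, namely $p^2$ when $\dot z=0$ and $p^2-2ze$ when $\dot z\neq 0$, acts bijectively on $N_{\dot z,\dot h}$), and only then checks the commutation relations; with this ordering the assertion about $c$ is automatic by construction, whereas you treat the two verifications independently.
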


\begin{proof}
If $\dot{z}=0$, then from \eqref{A-01} we know that $p=tx^{-1}$ which acts
bijectively on $N_{ \dot{z},\dot{h}}$. From \eqref{eq55} we obtain that
$f=(eq^2+hpq+\dot{c})p^{-2}$, that is $f(v\otimes x^i)=((\frac{d}{d
t}+(\dot{h}-2 )t^{-1}+\dot{c}t^{-2})v)\otimes x^{i+2}$. This indicates the formulae for $\dot{z}=0$
and they are checked by a (long but) straightforward computation.

If $\dot{z}\ne 0$, then from \eqref{A-11} we know that
$$\aligned p^2-2ze=&(x^{-1}(2\dot{z}\partial_t+(\partial_x-1)\dot{z}))^2-\\ &-2
x^{-2}\dot{z}\left(t+\dot{z}(2\partial_x-3))\partial_t+2\dot{z}
\partial_t^2+\frac{(\partial_x-1)(\partial_x-2)}2\dot{z}\right)\\
=&-2\dot{z}tx^{-2}\endaligned $$ which is bijective on $A_{
\dot{z},\dot{h}}$. From \eqref{eq55} we obtain that
$$f=(eq^2+hpq+z(h^2+h)/2+\dot{c})(p^2-2ze)^{-1},$$
that is $f(v\otimes x^i)$ equals
{\small
$$
\left(\left(-\frac{1}{2\dot{z}}-\big(\dot{h}-\frac{1}{2}\big)\frac{d}{dt}-t\big(\frac{d}{dt}\big)^2
-\big(\frac{(\dot{h}-1)(\dot{h}-2)}{4}+\frac{\dot{c}}{2\dot{z}}\big)t^{-1}\right)v\right)\otimes x^{i+2}.$$
}\hspace{-2mm}
This indicates the formulae for $\dot{z}\neq 0$
and they are checked by a (long but) straightforward computation.
\end{proof}

Let $\dot{z}\in \C^*$ and $\lambda \in \C\backslash \Z$. For convenience,
we define the simple $\H$-module $G(\dot{z},\lambda)=\C[x,x^{-1}]$
as follows (see \cite{LMZ}):
$$\aligned &q x^i=x^{i+1},\,\,\,\, p x^i=\dot{z}(\lambda+i)x^{i-1},\,\,\,\,z x^i=\dot{z} x^i.\endaligned$$
Note that the $\H$-module $G(\dot{z},\lambda)$ is an $\H$-submodule
of the simple $\fa$-module $Q'(\dot{z}, \dot{h},\lambda)$ for any
$\dot{h}\in\C$.

Now we can formulate the main result of this section.

\begin{thm}\label{thmtwo}
Let $V$ be a simple weight $\fa$-module.
\begin{enumerate}[$($a$)$]
\item\label{thmtwo.1}
If $V$ is a highest weight module over $\fa$, then $H(V)$ is a
highest weight $\S$-module, which has a unique simple quotient.
\item\label{thmtwo.2}
If $V\cong \bar{M}_{\fa}^-(\dot{z},\dot{h})$ with  $\dot{z}\ne 0$, then
\begin{displaymath}
H(V)\cong M_{\H}^-(\dot{z})^{\S}\otimes
M_{\sl_2}(\dot{h}-\frac{1}{2})^{\S}.
\end{displaymath}
The latter module has a unique simple quotient and this simple quotient is isomorphic to
$M_{\H}^-(\dot{z})^{\S}\otimes \bar{M}_{\sl_2}(\dot{h}-\frac{1}{2})^{\S}$.
\item\label{thmtwo.3}
If $V\cong Q(\lambda,\dot{h})$ for some $\dot{h}\in \C$ and $\lambda\in \C^*$, then $H(V)$ is simple.
\item\label{thmtwo.4}
If $V\cong Q'(\dot{z}, \dot{h},\lambda)$ for some $\dot{h}\in
\C,\dot{z}\in \C^*,\lambda \in \C\backslash \Z$, then
\begin{displaymath}
H(V)\cong G(\dot{z},\lambda)^{\S}\otimes
M_{\sl_2}(\lambda+\dot{h}+\frac{1}{2})^{\S}.
\end{displaymath}
The latter module has a unique simple quotient and this simple quotient is isomorphic to
$G(\dot{z},\lambda)^{\S}\otimes \bar{M}_{\sl_2}(\lambda+\dot{h}+\frac{1}{2})^{\S}$.
\item\label{thmtwo.5}
If $V\cong N_{\dot{z},\dot{h}}$, where $N$ is an infinite
dimensional simple $R_{\dot{z}}$-module, then any simple quotient
of $H(V)$ is isomorphic to  $N_{\dot{c}, \dot{z},\dot{h}}$
for some $\dot{c}\in \C$.
\end{enumerate}
\end{thm}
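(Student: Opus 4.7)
I will treat the five cases of Theorem~\ref{thmtwo} separately, using the PBW identification $H(V)\cong \C[f]\otimes V$ and Schur's lemma applied to the central element $c$ of~\eqref{eq55}. Case~\eqref{thmtwo.1} is immediate: if $v_0\in V$ is a highest weight vector, then $1\otimes v_0$ is killed by $e$ and $p$ and has the maximal weight of $H(V)$ (since $f$ strictly lowers $h$-weights), so $H(V)$ is a highest weight $\S$-module and admits a unique simple quotient by the standard argument.

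For cases~\eqref{thmtwo.2} and~\eqref{thmtwo.4} I construct the claimed tensor-product isomorphism explicitly. First I verify, using the $\S$-action formulas~\eqref{eq57} on $\H$-modules with nonzero central charge, that the $\fa$-submodule of the tensor product consisting of the first factor tensored with the highest weight vector $w$ of the $\sl_2$-factor is isomorphic to $V$. For~\eqref{thmtwo.2} the key weight computation is $pq\cdot p^k v_{\H}=-k\dot z\,p^k v_{\H}$ (from $[p,q]=z$ and $qv_{\H}=0$), which combined with $h=-pq/\dot z+\tfrac12$ on the $\H$-factor shows that $p^k v_{\H}\otimes w$ has weight $\dot h+k$ as required; case~\eqref{thmtwo.4} is analogous with $G(\dot z,\lambda)$ in place of $M_{\H}^-(\dot z)$. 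Adjunction then yields an $\S$-module map $\varphi\colon H(V)\to M_{\H}^-(\dot z)^{\S}\otimes M_{\sl_2}(\dot h-\tfrac12)^{\S}$; to see $\varphi$ is an isomorphism I filter both sides by $f$-degree (on $H(V)$ directly, on the target by the $f$-degree in the $\sl_2$-factor) and use $f(a\otimes w)=fa\otimes w+a\otimes fw$ to check that the associated graded map sends $f^j\otimes v$ to $v\otimes f^j w$, which is bijective. Finally, since $\H$ acts trivially on the $\sl_2$-factor, the Jacobson density theorem applied to the simple $\H$-module $M_{\H}^-(\dot z)$ forces every $\H$-submodule of the tensor product to be of the form $M_{\H}^-(\dot z)\otimes B'$, and $e,f,h$-stability then forces $B'$ to be an $\sl_2$-submodule; hence simple quotients correspond bijectively to those of the $\sl_2$-Verma, giving $\bar M_{\sl_2}(\dot h-\tfrac12)$ in the second factor.

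For case~\eqref{thmtwo.3}, I prove $H(V)$ itself is simple. Using $pv=0$, $[p,f]=-q$ and $[q,f]=0$ for $v\in V=Q(\lambda,\dot h)$, a short induction yields
\[
p^k(f^j\otimes v)=(-1)^k\frac{j!}{(j-k)!}\,f^{j-k}\otimes q^k v
\]
for $j\ge k$. For any nonzero $\xi=\sum_{j=0}^N f^j\otimes v_j$ with $v_N\ne 0$ we therefore get $p^N\xi=\pm N!\,q^N v_N\in V$, which is nonzero because $q$ acts injectively on $Q(\lambda,\dot h)$. Simplicity of $V$ as an $\fa$-module then forces the $\S$-submodule generated by $\xi$ to contain $V$, and hence all of $H(V)=U(\S)\cdot V$. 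For case~\eqref{thmtwo.5}, any simple quotient $X$ of $H(V)$ receives an injection $V\hookrightarrow X$ by simplicity of $V$; the element $c$ acts on $X$ as some scalar $\dot c$, and rearranging~\eqref{eq55} gives $f(p^2-2ze)=\dot c+eq^2+hpq+\tfrac12(h^2+h)z$ as operators on $X$. The computation inside the proof of Lemma~\ref{lem15} shows that $p^2-2ze$ equals $t^2x^{-2}$ if $\dot z=0$ and $-2\dot z tx^{-2}$ if $\dot z\ne 0$; in both cases it is bijective on $V$ because $t$ acts bijectively on the infinite dimensional simple $R_{\dot z}$-module $N$ by Lemma~\ref{lem1}\eqref{lem1.4}. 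Solving for $f$ therefore shows $fV\subseteq V$, forcing $V=X$ as a vector space; the resulting $\S$-structure on $V$ coincides with that of $N_{\dot c,\dot z,\dot h}$ by the defining formulas~\eqref{eq77}--\eqref{eq75}.

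\textbf{Main obstacle.} The hardest conceptual step is verifying the isomorphism in cases~\eqref{thmtwo.2} and~\eqref{thmtwo.4}: a naive weight-space dimension count is inconclusive because every weight space is infinite dimensional, and the $f$-degree filtration argument is what converts the comparison into an upper-triangular one with identity on the diagonal. The remaining cases reduce to standard highest weight theory, an explicit $U(\S)$-commutator calculation, or bijectivity of $p^2-2ze$ combined with Schur's lemma on $c$.
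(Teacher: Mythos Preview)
Your argument is correct in all five cases. Cases~\eqref{thmtwo.1} and~\eqref{thmtwo.5} match the paper's proof almost verbatim: the paper also observes that $p^2-2ze$ acts as $t^2x^{-2}$ (resp.\ $-2\dot z\,tx^{-2}$) on $N_{\dot z,\dot h}$, invokes Lemma~\ref{lem1}\eqref{lem1.4} for bijectivity of $t$, and then reads off the action of $f$ from the central relation~\eqref{eq55}. Your treatment of case~\eqref{thmtwo.3} is a minor variant: the paper takes a nonzero weight element $\sum_{i=0}^{k}c_i f^i\otimes x^{n-2i}$ of minimal $f$-degree $k$ and applies $p$ once to get a contradiction unless $k=0$, whereas you apply $p^N$ to kill the lower-order terms in one stroke via the identity $p^k(f^j\otimes v)=(-1)^k\frac{j!}{(j-k)!}f^{j-k}\otimes q^kv$. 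Both arguments rest on $pV=0$ and injectivity of $q$ on $Q(\lambda,\dot h)$.

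The genuine departure is in cases~\eqref{thmtwo.2} and~\eqref{thmtwo.4}. The paper does not argue directly: it writes $V$ as a tensor product of a one-dimensional $\fa$-module with the $\fa$-module obtained from the simple $\H$-module via~\eqref{eq57}, and then invokes \cite[Theorem~3]{LMZ} to identify $H(V)$ with the stated tensor product and to read off the simple quotient. Your proof is self-contained: you locate $V$ inside the target as $M_{\H}^-(\dot z)\otimes \C w$ (resp.\ $G(\dot z,\lambda)\otimes \C w$), use adjunction to produce $\varphi$, and prove bijectivity by an $f$-degree filtration argument whose associated graded is $f^j\otimes v\mapsto v\otimes f^jw$ (this follows from $f^j(v\otimes w)=\sum_k\binom{j}{k}f^kv\otimes f^{j-k}w$). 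Your submodule analysis via Jacobson density, using that $\H$ acts trivially on the $\sl_2$-factor and $\mathrm{End}_{\H}(M_{\H}^-(\dot z))=\C$, is an explicit replacement for what \cite{LMZ} packages abstractly. The trade-off is clear: the paper's route is shorter but imports an external theorem, while yours is longer but stays entirely within the present setup and makes the mechanism (upper-triangular with identity diagonal) transparent.
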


\begin{proof}
Claim~\eqref{thmtwo.1} is clear. To prove claim~\eqref{thmtwo.2}, we note that the module
$V=\bar{M}_{\fa}^-(\dot{z},\dot{h})$  has a simple $\H$-submodule $M_{\H}^-(\dot{z})$. Then, using \eqref{eq57},
we can extend the action of $\H$ on  $M_{\H}^-(\dot{z})$ to a lowest weight  $\fa$-module
$\bar{M}_{\fa}^-(\dot{z},-1/2)$.  Let $\C v$ be the $1$-dimensional $\fa$-module given by
$ev=pv=qv=zv=0$ and $hv=(\dot{h}-\frac{1}{2})v$. Then we have
$V\cong \C v\otimes \bar{M}_{\fa}^-(\dot{z},-1/2)$. From \cite[Theorem~3]{LMZ} we know that $H(V)\cong
M_{\H}^-(\dot{z})^{\S}\otimes \bar{M}_{\sl_2}(\dot{h}-\frac{1}{2})^{\S}$. The rest of
claim~\eqref{thmtwo.2} now follows easily.

Let $M$ be a nonzero submodule of $H(V)$.  Then $M$ is a weight module. Choose
$0\ne v_n=\sum_{i=0}^k c_i f^i \otimes x^{n-2i} \in M$ such that $k$ is minimal.
If $k>0$, then $0\ne p v_n=-\sum_{i=1}^k ic_i f^{i-1} \otimes x^{n-2i-1}\in M$ which contradicts
our choice of $v_n$. Hence $k=0$ and $v_n\in 1\otimes V$. Now $M$ has to coincide with $H(V)$ since
$V$ is a simple  $\fa$-module. Claim~\eqref{thmtwo.3} follows.

To prove claim~\eqref{thmtwo.4}, we note that $V=Q'(\dot{z}, \dot{h},\lambda)$ has a simple
$\H$-sub\-module $G(\dot{z},\lambda)$. Then we apply \eqref{eq57} to make $G(\dot{z},\lambda)$  into an $\fa$-module
$G(\dot{z},\lambda)^{\fa}$ with $h(x^i)=-(\lambda+i+1/2)x^i$.  Let $\C v$ be the one-dimensional $\fa$-module given by
$ev=pv=qv=zv=0$ and $hv=(\dot{h}+\lambda+\frac{1}{2})v$. Then $V\cong \C v\otimes G(\dot{z},\lambda)_{\fa}$. From
\cite[Theorem~3]{LMZ} we know that $H(V)\cong C(\dot{z},\lambda)^{\S}\otimes
M_{\sl_2}(\lambda+\dot{h}+\frac{1}{2})^{\S}$.  The rest of  claim~\eqref{thmtwo.4} now follows easily.

Finally, we prove claim~\eqref{thmtwo.5}. During the proof of Lemma~\ref{lem15} we computed that in
the module $N_{\dot{z},\dot{h}}$ we have $p^2 (v\otimes x^i)=(t^2v)\otimes x^{i-2}$ if $\dot{z}=0$ and
$(p^2-2ez)(v\otimes x^i)=(-2\dot{z} tv)\otimes x^{i-2}$ if $\dot{z}\ne 0$.
Recall that $t$ acts bijectively on $N$ since $N$ is infinite dimensional. Thus $p^2-2ez$, which is the coefficient at
$f$ in $c$, acts bijectively on $N_{\dot{z},\dot{h}}$ in this case. Let $M$ be any simple quotient of $H(V)$. Then
$c$ acts as the scalar $\dot{c}$ on $M$. Now we have $f (1\otimes V)\subset 1\otimes V$ in $M$, and the action of $f$ is
uniquely determined by $\dot{c}$. Thus we have $M\cong N_{\dot{c},\dot{z},\dot{h}}$. This completes the proof.
\end{proof}

Let $\fa^{op}$ be the parabolic subalgebra of $\mathcal{S}$ spanned by $\{f,h,p,q,z\}$, which is isomorphic to
$\fa$. There are actually many simple weight $\S$-modules that do not contain any simple $\fa$-submodule or
any simple $\fa^{op}$-submodule. For example, if we take $V$ to be a simple weight $\sl_2$-module that is not
highest or lowest weight module and $\dot{c}\ne0$, then the simple $\S$-module $V^{\S}\otimes M_{\H}(\dot{z})^{\S}$
contains neither simple $\fa$-submodules nor simple $\fa^{op}$-submodules.

Taking into account the results of this paper it is natural to ask whether one can classify all
simple weight $\S$-modules or all simple $\fa$-module.
\vspace{5mm}

\noindent {\bf Acknowledgments.} The research presented in this paper was carried out during the visit of the
first author to University of Waterloo and of the second author to Wilfrid Laurier University.  The first author
thanks professors Wentang Kuo and Kaiming Zhao for sponsoring his visit, and University of Waterloo
for providing excellent working conditions. The second author thanks Wilfrid Laurier University for hospitality.
We thank the referee for helpful comments and nice suggestions.

R.L. is partially supported by NSF of China (Grant 11371134) and Jiangsu Government Scholarship
for Overseas Studies (JS-2013-313). \\
V.M. is partially supported by the Swedish Research Council.\\
K.Z. is partially supported by  NSF of China (Grant 11271109) and NSERC.

\vspace{0.2cm}

\noindent \noindent R.L.: Department of Mathematics, Soochow
University,  Suzhou,  P. R. China; e-mail:
{rencail\symbol{64}amss.ac.cn}

\noindent V.M.: Department of Mathematics, Uppsala University, Box
480, SE-751 06, Uppsala, Sweden; e-mail:
{mazor\symbol{64}math.uu.se}

\noindent K.Z.: Department of Mathematics, Wilfrid Laurier
University, Waterloo, Ontario, N2L 3C5, Canada; and College of
Mathematics and Information Science, Hebei Normal (Teachers)
University, Shijiazhuang 050016, Hebei, P. R. China; e-mail:
{kzhao\symbol{64}wlu.ca}


\begin{thebibliography}{999999}
\bibitem[AI]{AI} N. Aizawa, P. S. Isaac. On irreducible representations of the exotic conformal
Galilei algebra. J. Phys. A: Math. Theor. {\bf 44} (2011), 035401.
\bibitem[AIK]{AIK} N. Aizawa, P. S. Isaac, Y. Kimura. Highest weight representations and
Kac determinants for a class of conformal Galilei algebras with central extension. International
Journal of Mathematics {\bf 23} (2012), No. 11, 1--25.
\bibitem[BTLC]{BTLC} J. L. Bueso, J. Gomez Torrecillas, F. J. Lobillo, F. J. Castro, An introduction to effective calculus in quantum groups, in:
Caenepeel, Stefaan et al. (Eds.), Rings, Hopf algebras, and Brauer groups. Proceedings of the Fourth Week on Algebra and
Algebraic Geometry, SAGA-4, Antwerp and Brussels, Belgium, September 12-17, 1996, Marcel Dekker, New York, NY, Lect.
Notes Pure Appl. Math. vol. 197 (1998),  55-83.
\bibitem[B]{B} R.~Block. The irreducible representations of the Lie algebra $\mathfrak{sl}(2)$ and
of the Weyl algebra. Adv. in Math. {\bf 139} (1981), no. 1, 69--110.
\bibitem[BR]{BR} A.J. Bray, A.D. Rutenberg. Growth laws for phase-ordering. Phys. Rev. E {\bf 49}, R27, (1994).
\bibitem[CZ]{CZ}
L.~Cagliero, F.~Szechtman. The classification of uniserial $\sl(2)\rtimes V(m)$-modules and a new
interpretation Racah-Wigner $6j$-symbols. J. Algebra  {\bf 386}  (2013), 142--175.
\bibitem[Di]{Di} J.~Dixmier. Enveloping algebras. Graduate Studies
in Mathematics, {\bf 11}. American Mathematical Society, Providence, RI, 1996.
\bibitem [DDM] {DDM}
V.K.~Dobrev, H.-D.~Doebner and C.~Mrugalla. A q-Schr\"odinger
algebra, its lowest weight representations and generalized
q-deformed heat/Schr\"odinger equations. J. Phys. A {\bf 29} (1996), 5909--5918.
\bibitem [D] {D} B. Dubsky. Classification of simple weight modules with finite-dimensional weight
spaces over the Schr\"odinger algebra, Lin. Algebra Appl. {\bf 443} (2014), 204--214.
\bibitem[DLMZ] {DLMZ} B. Dubsky, R. L\"u, V.~Mazorchuk, K. Zhao. Category $\mathcal{O}$ for the
Schr\"odinger algebra, arXiv: 1402.0334.  Linear Algebra Appl.,  460 (2014), 17-50.
\bibitem[GIl]{GI1} A. Galajinsky,   I. Masterov.   Dynamical realization of l -conformal
Galilei algebra and oscillators. Nuclear Phys. B  {\bf 866}  (2013), no. 2, 212--227.
\bibitem[GI2]{GI2} A. Galajinsky,   I. Masterov.   Remarks on l -conformal extension of
the Newton-Hooke algebra. Phys. Lett. B  {\bf 702}  (2011),  no. 4, 265--267.
\bibitem[HP]{HP} P. Havas and J. Plebanski. Conformal extensions of the Galilei
group and their relation to the Schr\"odinger group. J. Math. Phys. {\bf 19 } (1978), 482--488.
\bibitem[H1]{H1} M.~Henkel. Schrodinger-invariance in strongly anisotropic critical
systems. J. Stat. Phys. {\bf 75}, 1023 (1994).
\bibitem[H2]{H2}
M.~Henkel. Local scale invariance and strongly anisotropic equilibrium
critical systems. Phys. Rev. Lett. {\bf 78}, 1940 (1997).
\bibitem[H3]{H3} M. Henkel. Phenomenology of local
scale-invariance: from conformal invariance to dynamical scaling.
Nucl. Phys. B {\bf 641}, 405 (2002).
\bibitem[H4]{H4}
M. Henkel. Causality from dynamical symmetry: an example from local
scale-invariance. Proceedings of the 7th AGMP conference 24-26 Oct.
2011 at Mulhouse (France), in press (2013), arXiv:1205.5901
\bibitem[HEP]{HEP}
M. Henkel, T. Enss, M. Pleimling. On the identification of
quasiprimary operators in local scale-invariance. J. Phys. A Math.
Gen. {\bf 39}, L589 (2006).
\bibitem[HU]{HU}
M. Henkel, J. Unterberger. Schr\"odinger invariance and space-time symmetries.
Nucl. Phys. B {\bf 660}, 407 (2003).
\bibitem[HS1]{HS1} M. Henkel,  S. Stoimenov. Physical ageing and new representations of some Lie algebras of
local scale-invariance. arXiv:1410.6086.
\bibitem[HS2]{HS2} M. Henkel,  S. Stoimenov. On non-local representations of the ageing algebra,
Nuclear Physics B {\bf 847} (2011), 612--627.
\bibitem[HSSU]{HSSU}  M. Henkel, B. Schott, S. Stoimenov, J. Unterberger. The Poincare algebra in
the context of ageing systems: Lie structure, representations, Apell
systems and coherent states. Confluentes Mathematici {\bf 4}, No. 4 (2012), 1250006, 23pp.
\bibitem[Hu]{Hu} J. E. Humphreys. Introduction to Lie Algebras and Representation Theory, Graduate Texts
in Mathematics {\bf  9}, Springer-Verlag, New York-Berlin, 1978.
\bibitem[Ja]{Ja} N. Jacobson.  Basic algebra. II. W. H. Freeman and Co., San Francisco, Calif., 1980.
\bibitem[LMZ]{LMZ} R. L\"u, V.~Mazorchuk, K. Zhao. On simple modules over conformal Galilei algebras,
J. Pure Appl. Algebra {\bf 218} (2014), 1885-1899.
\bibitem[LZ1]{LZ1} R. L\"u, K. Zhao. Generalized oscillator representations of  the twisted
Heisenberg-Virasoro algebra. Preprint arXiv:1308.6023v1.
\bibitem[M]{M}
O.~Mathieu. Classification of irreducible weight modules. Ann. Inst.
Fourier {\bf  50} (2000), 537--592.
\bibitem[NOR]{NOR}
J. Negro, M. A. del Olmo, A. Rodriguez-Marco. Nonrelativistic conformal groups. J. Math. Phys. {\bf 38} (1997), 3786--3809.
\bibitem[PH]{PH}
A. Picone, M. Henkel. Local scale-invariance and ageing in noisy systems. Nucl. Phys. B {\bf 688} [FS], 217 (2004).
\bibitem [WZ] {WZ}
Y. Wu,  L. Zhu. Simple weight modules for Schroinger algebra. Linear Algebra Appl.
{\bf 438}  (2013), no. 1, 559--563.
\end{thebibliography}
\end{document}